\newtheorem{thm}{Theorem}[section]
\newtheorem{lem}[thm]{Lemma}
\newtheorem{prop}[thm]{Proposition}
\theoremstyle{definition}
\theoremstyle{remark}
\newtheorem{rem}[thm]{\textbf{Remark}}
\newtheorem{rems}[thm]{\textbf{Remarks}}
      \def\@makefnmark{%
         \leavevmode
            \raise.9ex\hbox{\check@mathfonts
                \fontsize\sf@size\z@\normalfont%
                            \@thefnmark}%
       }
\newcommand{\p}{\mathbb{P}}
\renewcommand{\q}{\mathbb{Q}}
\newcommand{\R}{\mathbb{R}^{n}}
\newcommand{\D}{\textrm{div}}
\newcommand{\dd}{\textrm{d}}
\begin{document}

\title[]{Exterior Navier-Stokes flows for bounded data}
\author[]{Ken Abe}
\date{}
\address[K. ABE]{Department of Mathematics, Faculty of Science, Kyoto University, Kitashirakawa Oiwakecho, Sakyo, Kyoto 606-8502, Japan}
\email{kabe@math.kyoto-u.ac.jp}
\subjclass[2010]{35Q35, 35K90}
\keywords{Navier-Stokes equations, bounded function spaces, exterior problem, $D$-solutions}
\date{\today}

\maketitle


\begin{abstract}
We prove unique existence of mild solutions on $L^{\infty}_{\sigma}$ for the Navier-Stokes equations in an exterior domain in $\mathbb{R}^{n}$, $n\geq 2$, subject to the non-slip boundary condition.
\end{abstract}

\vspace{10pt}

\section{Introduction}

\vspace{10pt}

We consider the initial-boundary value problem of the Navier-Stokes equations in an exterior domain $\Omega\subset \R$, $n\geq 2$: 
\begin{equation*}
\begin{aligned}
\partial_t u-\Delta{u}+u\cdot \nabla u+\nabla{p}&= 0 \quad \textrm{in}\quad \Omega\times(0,T),    \\
\D\ u&=0\quad \textrm{in}\quad \Omega\times(0,T),    \\
u&=0\quad \textrm{on}\quad \partial\Omega \times (0,T),  \\
u&=u_0\quad\hspace{-4pt} \textrm{on}\quad \Omega \times \{t=0\}.  
\end{aligned}
\tag{1.1}
\end{equation*}
There is a large literature on the solvability of the exterior problem for initial data decaying at space infinity. However, a few results are available for non-decaying data. A typical example of non-decaying flow is a stationary solution of (1.1) having a finite Dirichlet integral, called $D$-solution \cite{Leray1933}. It is known that $D$-solutions are bounded in $\Omega$ and asymptotically constant as $|x|\to\infty$; see Remarks 1.2 (ii). In this paper, we do not impose on $u_0$ conditions at space infinity. 

The purpose of this paper is to establish a solvability of (1.1) for merely bounded initial data. We set the solenoidal $L^{\infty}$-space, 
\begin{equation*}
L^{\infty}_{\sigma}(\Omega)
=\left\{ f\in L^{\infty}(\Omega)\ \Bigg|\ \int_{\Omega}f\cdot \nabla \varphi\dd x=0 \quad \textrm{for}\ \varphi\in \hat{W}^{1,1}(\Omega) \right\},  
\end{equation*}
by the homogeneous Sobolev space $\hat{W}^{1,1}(\Omega)=\{\varphi\in L^{1}_{\textrm{loc}}(\Omega)\ |\ \nabla \varphi\in L^{1}(\Omega)\ \}$. For exterior domains, the space $L^{\infty}_{\sigma}$ agrees with the space of all bounded divergence-free vector fields, whose normal trace is vanishing on $\partial\Omega$ \cite{AG2}. The $L^{\infty}$-type solvability for (1.1) is recently established on $C_{0,\sigma}$ in the previous work of the author \cite{A4}, where $C_{0,\sigma}$ is the $L^{\infty}$-closure of $C_{c,\sigma}^{\infty}$, the space of all smooth solenoidal vector fields with compact support in $\Omega$. Since the condition $u_0\in C_{0,\sigma}$ imposes the decay $u_0\to 0$ as $|x|\to\infty$, we develop an existence theorem for non-decaying space $L^{\infty}_{\sigma}$, which in particular includes asymptotically constant vector fields. Moreover, the space $L^{\infty}_{\sigma}$ includes vector fields rotating at space infinity; see Remarks 1.2 (iv). When $\Omega$ is the whole space \cite{GIM} or a half space \cite{Sl03}, \cite{BJ}, the existence of mild solutions of (1.1) on $L^{\infty}_{\sigma}$ is proved by explicit formulas of the Stokes semigroup. In this paper, we prove unique existence of mild solutions on $L^{\infty}_{\sigma}$ for exterior domains based on $L^{\infty}$-estimates of the Stokes semigroup \cite{AG2}, \cite{A3}.

To state a result, let $S(t)$ denote the Stokes semigroup. It is proved in \cite{AG2} that $S(t)$ is an analytic semigroup on $L^{\infty}_{\sigma}$ for exterior domains of class $C^{3}$. Let $\p$ denote the Helmholtz projection. We write $\D\ F=(\sum_{i=1}^{n}\partial_{i}F_{ij})$ for matrix-valued functions $F=(F_{ij})$. It is proved in \cite{A3} that the composition operator $S(t)\p\D$ satisfies an estimate of the form
\begin{equation*}
\big\|S(t)\p\D F\big\|_{L^{\infty}(\Omega)}\leq \frac{C_{\alpha}}{t^{\frac{1-\alpha}{2}}}\big\|F\big\|_{L^{\infty}(\Omega)}^{1-\alpha}
\big\|\nabla F\big\|_{L^{\infty}(\Omega)}^{\alpha},    \tag{1.2}
\end{equation*}
for $F\in C^{1}_{0}\cap W^{1,2}(\Omega)$, $t\leq T_{0}$ and $\alpha\in (0,1)$. Here, $W^{1,2}(\Omega)$ denotes the Sobolev space and $C_{0}^{1}(\Omega)$ denotes the $W^{1,\infty}$-closure of $C_{c}^{\infty}(\Omega)$, the space of all smooth functions with compact support in $\Omega$. Although the projection $\p$ may not act as a bounded operator on $L^{\infty}$, the $L^{\infty}$-estimate (1.2) implies that the composition $S(t)\p\D$ is uniquely extendable to a bounded operator from $C_{0}^{1}$ to $C_{0,\sigma}$. Note that $F\in C^{1}_{0}$ imposes a decay condition at space infinity. Thus the extension to $C_{0}^{1}$ is not sufficient for studying non-decaying solutions. In this paper, we prove that the composition $S(t)\p\D$ is uniquely extendable to a bounded operator $\overline{S(t)\p\D}$ from the non-decaying space $W^{1,\infty}_{0}$ to $L^{\infty}_{\sigma}$, where $W^{1,\infty}_{0}$ is the space of all functions in  $W^{1,\infty}$ vanishing on $\partial\Omega$.

By means of the new extension, we study the integral equation on $L^{\infty}_{\sigma}$ of the form
\begin{equation*}
u(t)=S(t)u_{0}-\int_{0}^{t}\overline{S(t-s)\p \D} (uu)(s)ds.     \tag{1.3}
\end{equation*}
Here, $uu=(u_iu_j)$ is the tensor product. We call solutions of (1.3) mild solution on $L^{\infty}_{\sigma}$. Since the projection $\mathbb{P}$ may not be bounded on $L^{\infty}$, the extension  $\overline{S(t)\mathbb{P}\D}$ is not expressed by the individual operators. We thus prove that mild solutions satisfy  (1.1) by using a weak form. Let $C^{\infty}_{c,\sigma}(\Omega\times[0,T)) $ denote the space of all smooth solenoidal vector fields with compact support in $\Omega\times[0,T)$. Let $C([0,T]; X)$ (resp. $C_{w}([0,T]; X)$) denote the space of all (resp. weakly-star) continuous functions from $[0,T]$ to a Banach space $X$. Let $BUC_{\sigma}(\Omega)$ denote the space of all solenoidal vector fields in $BUC(\Omega)$ vanishing on $\partial\Omega$, where  $BUC(\Omega)$ is the space of all bounded uniformly continuous functions in $\overline{\Omega}$. Let $[\cdot]_{\Omega}^{(\beta)}$ denote the $\beta$-th H\"older semi-norm in $\overline{\Omega}$. The main result of this paper is the following: 

\vspace{10pt}

\begin{thm}
\noindent 
Let $\Omega$ be an exterior domain with $C^{3}$-boundary in $\mathbb{R}^{n}$, $n\geq 2$. For $u_0\in L^{\infty}_{\sigma}$, there exist $T\geq \varepsilon/||u_0||_{\infty}^{2}$ and a unique mild solution $u\in C_{w}([0,T]; L^{\infty})$ such that  
\begin{equation*}
\int_{0}^{T}\int_{\Omega}\big(u\cdot(\partial_{t}\varphi+\Delta \varphi)+u u: \nabla\varphi  \big)\dd x\dd t=-\int_{\Omega}u_0(x)\cdot \varphi (x,0)\dd x     \tag{1.4}
\end{equation*} 
for all $\varphi\in C^{\infty}_{c,\sigma}(\Omega\times [0,T))$, with some constant $\varepsilon=\varepsilon_{\Omega}$. The solution $u$ satisfies 
\begin{equation*}
\sup_{0< t\leq T}\left\{\big\|u\big\|_{L^{\infty}(\Omega)}(t)+t^{\frac{1}{2}}\big\|\nabla u\big\|_{L^{\infty}(\Omega)}(t)+t^{\frac{1+\beta}{2}}\Big[\nabla u\Big]^{(\beta)}_{\Omega}(t)  \right\}\leq C_1\big\|u_0\big\|_{L^{\infty}(\Omega)},       \tag{1.5}
\end{equation*}
\begin{equation*}
\sup_{x\in \Omega}\left\{\Big[u\Big]^{(\gamma)}_{[\delta,T]}(x)+\Big[\nabla u\Big]^{(\frac{\gamma}{2})}_{[\delta,T]}(x) \right\}\leq C_2\big\|u_0\big\|_{L^{\infty}(\Omega)},   \tag{1.6}
\end{equation*}
for $\beta, \gamma\in (0,1)$ and $\delta\in (0,T)$ with the constant $C_1$, independent of $u_0$ and $T$. The constant $C_2$  depends on $\gamma$, $\delta$ and $T$. If $u_0\in BUC_{\sigma}$, $u$, $t^{1/2}\nabla u\in C([0,T]; BUC)$ and $t^{1/2}\nabla u$ vanishes at time zero.
\end{thm}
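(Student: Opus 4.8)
The plan is to construct $u$ by a contraction argument in a time-weighted $L^{\infty}$-space and then recover the weak form and the Hölder bounds a posteriori. For $T>0$ I would work in the space $X_T$ of $u\in C_{w}((0,T];L^{\infty}_{\sigma})$ with
\[
\|u\|_{X_T}:=\sup_{0<t\leq T}\Big(\|u(t)\|_{\infty}+t^{1/2}\|\nabla u(t)\|_{\infty}\Big)<\infty,
\]
and write (1.3) as the fixed point problem $u=S(\cdot)u_0-B(u,u)$, where $B(u,v)(t)=\int_0^t\overline{S(t-s)\p\D}\,(uv)(s)\,\dd s$. The analyticity of $S(t)$ on $L^{\infty}_{\sigma}$ from \cite{AG2} gives $\|S(t)u_0\|_{\infty}\leq C\|u_0\|_{\infty}$ and $\|\nabla S(t)u_0\|_{\infty}\leq Ct^{-1/2}\|u_0\|_{\infty}$, so $\|S(\cdot)u_0\|_{X_T}\leq C_0\|u_0\|_{\infty}$. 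For the bilinear term I would first upgrade (1.2) to a gradient bound via the analyticity splitting $\nabla\overline{S(t)\p\D}=(\nabla S(t/2))\,\overline{S(t/2)\p\D}$, which yields
\[
\big\|\nabla\overline{S(t)\p\D}F\big\|_{\infty}\leq\frac{C_{\alpha}}{t^{1-\alpha/2}}\|F\|_{\infty}^{1-\alpha}\|\nabla F\|_{\infty}^{\alpha}.
\]
Applying (1.2) and this estimate to $F=(uv)(s)$, together with $\|(uv)(s)\|_{\infty}\leq\|u\|_{X_T}\|v\|_{X_T}$ and $\|\nabla(uv)(s)\|_{\infty}\leq 2s^{-1/2}\|u\|_{X_T}\|v\|_{X_T}$, the two integrals $\int_0^t(t-s)^{-(1-\alpha)/2}s^{-\alpha/2}\dd s=Ct^{1/2}$ and $\int_0^t(t-s)^{-(1-\alpha/2)}s^{-\alpha/2}\dd s=C$ combine to give the core bilinear estimate $\|B(u,v)\|_{X_T}\leq C_{*}T^{1/2}\|u\|_{X_T}\|v\|_{X_T}$.

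On the closed ball of radius $2C_0\|u_0\|_{\infty}$ in $X_T$, the bound on $B$ and the identity $\Phi(u)-\Phi(v)=-B(u-v,u)-B(v,u-v)$ make $\Phi(u)=S(\cdot)u_0-B(u,u)$ a contraction as soon as $4C_{*}C_0T^{1/2}\|u_0\|_{\infty}\leq 1$; this produces the existence time $T\geq\varepsilon/\|u_0\|_{\infty}^{2}$ with $\varepsilon=(4C_{*}C_0)^{-2}$, a unique fixed point $u$, and the first two terms of (1.5).

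The step I expect to be the main obstacle is the weak form (1.4), precisely because $\p$ is unbounded on $L^{\infty}$, so $\overline{S(t)\p\D}$ cannot be split into $S(t)$, $\p$ and $\D$ acting separately. I would handle this by duality. For solenoidal $\psi$, the identity $\langle\overline{S(\tau)\p\D}F,\psi\rangle=-\langle F,\nabla S(\tau)^{*}\psi\rangle$—first verified for $F\in C_0^{1}\cap W^{1,2}$ using self-adjointness of $\p$ and $\p\psi=\psi$, then extended via (1.2)—moves every derivative onto the smooth test function. Substituting the mild formula into $\int_0^T\langle u,\partial_t\varphi+\Delta\varphi\rangle\,\dd t$, applying this identity, using Fubini, and integrating backward in time via $\tfrac{d}{dt}\big(S(t-s)^{*}\varphi(t)\big)=S(t-s)^{*}(\partial_t\varphi+\Delta\varphi)(t)$ on solenoidal fields (with $\varphi(\cdot,T)=0$ and $S(0)^{*}=I$), the inner integral collapses to $-\varphi(s)$, so the nonlinear contribution equals $-\int_0^T\langle (uu),\nabla\varphi\rangle\,\dd t$, matching the tensor term of (1.4); the linear part reproduces $-\langle u_0,\varphi(\cdot,0)\rangle$ since $S(t)u_0$ is the weak Stokes flow. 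Finally, the weak-star convergence $S(t)u_0\rightharpoonup u_0$ together with $\|B(u,u)(t)\|_{\infty}=O(t^{1/2})\to0$ yields $u\in C_{w}([0,T];L^{\infty})$ and the attainment of the initial datum.

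For the remaining regularity, the third term of (1.5) follows by repeating the bilinear integration with the Hölder-gradient estimate $[\nabla\overline{S(t)\p\D}F]^{(\beta)}_{\Omega}\leq C_{\alpha}t^{-(2+\beta-\alpha)/2}\|F\|_{\infty}^{1-\alpha}\|\nabla F\|_{\infty}^{\alpha}$ of the Stokes semigroup (valid for $\beta<\alpha<1$), obtained by the same analyticity splitting; the convergent integral $\int_0^t(t-s)^{-(2+\beta-\alpha)/2}s^{-\alpha/2}\dd s=Ct^{-\beta/2}$ then gives the weight $t^{(1+\beta)/2}$. The interior bounds (1.6) on $[\delta,T]$ follow from parabolic regularity, since the fixed point is smooth for $t>0$. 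For $u_0\in BUC_{\sigma}$ the semigroup is strongly continuous, so $S(t)u_0\to u_0$ in $BUC$ and the Duhamel term lies in $C([0,T];BUC)$ with vanishing $t^{1/2}\nabla$-part at $t=0$; hence $u,\,t^{1/2}\nabla u\in C([0,T];BUC)$ as claimed.
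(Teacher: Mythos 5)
Your route is genuinely different from the paper's: you run a direct contraction argument in the weighted space $X_T$ and then try to recover the weak form (1.4) by duality, whereas the paper deliberately avoids iterating on $L^{\infty}_{\sigma}$. It approximates $u_0$ by $u_{0,m}\in C^{\infty}_{c,\sigma}$ with $\|u_{0,m}\|_{\infty}\leq C\|u_0\|_{\infty}$, invokes the already-established existence theorem on $C_{0,\sigma}$ from \cite{A4} (Proposition 3.1) to obtain solutions $u_m$ on a uniform time interval satisfying (1.3)--(1.6), and passes to the limit by Ascoli--Arzel\`a together with the stability of $S(t)$ and of $\overline{S(t)\p\D}$ under bounded, locally uniform approximation (Remarks 2.9 (ii)). The identity (1.4) and the estimates (1.5)--(1.6) are then simply inherited from the approximants. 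Section 3 flags exactly why this detour is taken: ``the property of mild solutions (1.4) may not follow from a direct iteration argument on $L^{\infty}_{\sigma}$.''

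The genuine gap in your proposal is the duality step. The identity $\langle\overline{S(\tau)\p\D}F,\psi\rangle=-\langle F,\nabla\p S(\tau)^{*}\psi\rangle$ is unproblematic for $F\in C^{\infty}_{c}$, but to extend it to $F=(uu)(s)\in W^{1,\infty}_{0}$ you must pass to the limit along an approximating sequence $F_m\to F$ that converges only \emph{locally uniformly} in $\overline{\Omega}$; this requires $\nabla\p S(\tau)^{*}\psi\in L^{1}(\Omega)$ as an integrable majorant. Lemma A.1 provides $\nabla\p\varphi\in L^{1}$ only for compactly supported smooth $\varphi$, while $S(\tau)^{*}\psi$ is neither compactly supported nor covered by any $L^{1}$-estimate in the paper; an $L^{1}$-theory for the adjoint Stokes semigroup and for $\nabla\p S(\tau)^{*}$ in exterior domains is precisely what is missing. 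Note also that (1.2)/(2.5) bound $\|S(t)\p\D F\|_{\infty}$ only through the interpolated quantity $\|F\|_{\infty}^{1-\alpha}\|F\|_{1,\infty}^{\alpha}$, so duality with these estimates does not hand you the required $L^{1}\to L^{1}$ bound. Without this, the nonlinear term in (1.4) is not justified, and the claimed collapse of the inner time integral is formal. The remaining ingredients (the linear bounds, the bilinear estimate with the integrals $\int_0^t(t-s)^{-(1-\alpha)/2}s^{-\alpha/2}\,\dd s\sim t^{1/2}$ and $\int_0^t(t-s)^{-(1-\alpha/2)}s^{-\alpha/2}\,\dd s\sim 1$, and the $BUC_{\sigma}$ continuity, which matches Proposition 3.4) are consistent with (2.5) and the paper; but the weak form is the crux, and your treatment of it does not close. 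You would also need a separate argument to upgrade uniqueness from the small ball in $X_T$ to the full class $C_{w}([0,T];L^{\infty})$ asserted in the theorem.
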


\vspace{5pt}

\begin{rems}

\noindent
 (i) (Blow-up rate)
 By the estimate of the existence time in Theorem 1.1, we obtain a blow-up rate of mild solutions $u\in C_{w}([0,T_*); L^{\infty})$ of the form
\begin{equation*}
\|u\|_{L^{\infty}(\Omega)}\geq \frac{\varepsilon'}{\sqrt{T_*-t}}\quad \textrm{for}\ t<T_{*},
\end{equation*}
with $\varepsilon'=\varepsilon^{1/2}$, where $t=T_{*}$ is the blow-up time. The above blow-up estimate was first proved by Leray \cite{Leray1934} for $\Omega=\mathbb{R}^{3}$. See \cite{GIM} for $n\geq 3$ and \cite{Sl03} (\cite{Mar09}, \cite{BJ}) for a half space. The statement of Theorem 1.1 is valid also for a half space and improves regularity properties of mild solutions on $L^{\infty}_{\sigma}$ proved in \cite{Sl03}, \cite{BJ}.

\noindent
(ii)($D$-solutions) 
In \cite{Leray1933}, Leray proved the existence of $D$-solutions $u$ satisfying $u-u_{\infty}\in L^{6}(\Omega)$ for $u_{\infty}\in \mathbb{R}^{3}$ in the exterior domain $\Omega\subset \mathbb{R}^{3}$. His construction is based on an approximation for $R\to\infty$ of the problem
\begin{align*}
-\Delta u_{R}+u_{R}\cdot \nabla u_{R}+\nabla p_{R}&=0\quad \textrm{in}\ \Omega_R,\\
\D\ u_R&=0\quad \textrm{in}\ \Omega_R,\\
u_{R}&=0\quad \textrm{on}\ \partial\Omega,\\
u_{R}&=u_{\infty}\hspace{4pt} \textrm{on}\ \{|x|=R\},
\end{align*}
for $\Omega_{R}=\Omega\cap \{|x|<R\}$ (\cite[Chapter 5, Theorem 5]{Lady61}). See also \cite[Theorem 3.2]{Fujita61} (\cite[Theorem X.4.1]{Gal}) for a different construction. If the Dirichlet integral is finite, stationary solutions of (1.1) are locally bounded in $\overline{\Omega}$ (e.g.,  \cite[Theorem X.1.1]{Gal}). Moreover, $D$-solutions are bounded as $|x|\to\infty$ by $u-u_{\infty}\in L^{6}(\Omega)$. Thus, $D$-solutions are elements of $L^{\infty}_{\sigma}$ for $n=3$.

When $n=2$, more analysis is needed for information about the behavior as $|x|\to\infty$ since a finite Dirichlet integral does not imply decays at space infinity (e.g., $u=(\log |x|)^{\alpha}$ for $0<\alpha<1/2$). Leray's construction gives $D$-solutions also in $\Omega\subset \mathbb{R}^{2}$.
 It is proved in \cite{GW} (\cite{GW2}) that Leray's solutions are bounded in $\overline{\Omega}$ and converge to some constant $\overline{u}_{\infty}$ in the sense that $\int_{0}^{2\pi}|u(re_{r})-\overline{u}_{\infty}|\dd \theta\to 0$ as $r\to\infty$, where $(r,\theta)$ is the polar coordinate and $e_{r}=(\cos\theta,\sin\theta)$. Moreover, every $D$-solutions are bounded and asymptotically constant in the above sense \cite[Theorem 12]{Amick88}. Thus, $D$-solutions are elements of  $L^{\infty}_{\sigma}$ also for $n=2$. Theorem 1.1 yields a local solvability of (1.1) around $D$-solutions without imposing decay conditions for initial disturbance.

\noindent
(iii) (Global well-posedness for $n=2$) 
It is well known that the exterior problem (1.1) for $n=2$ is globally well-posed for initial data having finite energy, e.g., \cite{KO}. However, global well-posedness is unknown for non-decaying data $u_0\in L^{\infty}_{\sigma}$. For the whole space, the vorticity $\omega=\partial_{1}u^{2}-\partial_{2}u^{1}$ satisfies the a priori estimate 
\begin{align*}
||\omega||_{L^{\infty}(\mathbb{R}^{2})}\leq ||\omega_0||_{L^{\infty}(\mathbb{R}^{2})}\quad t>0.   
\end{align*}
It is proved in \cite{GMS} that the Cauchy problem of (1.1) for $n=2$ is globally well-posed for $u_0\in L^{\infty}_{\sigma}$ based on the local solvability result in \cite{GIM}. We proved a local solvability on $L^{\infty}_{\sigma}$ for exterior domains. Note that global solutions exist for rotationally symmetric initial data $u_0\in L^{\infty}_{\sigma}$; see below (iv).

\noindent 
(iv) (Rotating flows) An example of $u_0\in L^{\infty}_{\sigma}$ which is not asymptotically constant is a vector field rotating at space infinity. For example, we consider the two-dimensional unit disk $\Omega^{c}$ centered at the origin and a rotationally symmetric initial data $u_{0}=u_{0}^{\theta}(r)e_{\theta}(\theta)$ for $e_{\theta}(\theta)=(-\sin{\theta},\cos{\theta})$. Observe that $u_0$ is a  solenoidal vector field in $\Omega$ and a direction of $u_0$ varies for $\theta\in [0,2\pi]$ and $u_{0}^{\theta}\in L^{\infty}(1,\infty)$. Solutions of (1.1) for $u_0$ are rotationally symmetric and given by 
\begin{align*}
u=e^{t\Delta_{D}}u_0\qquad p=\int_{1}^{|x|}\frac{|u|^{2}}{r}\dd r,
\end{align*}
where $\Delta_{D}$ denotes the Laplace operator subject to the Dirichlet boundary condition. The solution $u$ is bounded in $\Omega\times (0,\infty)$ and non-decaying as $|x|\to\infty$.

\noindent 
(v) (Associated pressure) We invoke that the associated pressure of mild solutions on $L^{p}$ ($p\geq n$) is determined by the projection operator $\mathbb{Q}=I-\mathbb{P}$ and 
\begin{align*}
\nabla p=\mathbb{Q}\Delta u-\mathbb{Q}(u\cdot \nabla u).
\end{align*}
Since the projection $\mathbb{Q}$ may not be bounded on $L^{\infty}$, this representation is no longer available for mild solutions on $L^{\infty}_{\sigma}$. When $\Omega=\mathbb{R}^{n}$ or $\mathbb{R}^{n}_{+}$, the projection $\mathbb{Q}$ has explicit kernels and we are able to find associated pressure of mild solutions on $L^{\infty}$; see \cite{GIM} for $\Omega=\mathbb{R}^{n}$ and \cite{Sl03}, \cite{Mar09}, \cite{BJ} for $\Omega=\mathbb{R}^{n}_{+}$. Although explicit kernels are not available for exterior domains, we are able to find the associated pressure of mild solutions on $L^{\infty}$. We set 
\begin{align*}
\nabla p=\mathbb{K}W-\mathbb{Q}\D F   \tag{1.7}
\end{align*} 
for $W=-(\nabla u-\nabla^{T} u)n_{\Omega}$ and $F=uu$, where $n_{\Omega}$ is the unit outward normal on $\partial\Omega$ and $\mathbb{K}$ is a solution operator of the homogeneous Neumann problem (\textit{harmonic-pressure operator}) \cite[Remarks 4.3 (ii)]{AG2}. Note that $W=-\textrm{curl}\ u\times n_{\Omega}$ for $n=3$. The operators $\mathbb{K}$ and $\mathbb{Q}\D$ act for bounded functions and the associated pressure on $L^{\infty}$ is uniquely determined by (1.7) in the sense of distribution; see Remark 3.5 for a detailed discussion.   
\end{rems}

\vspace{10pt}
For asymptotically constant initial data $u_0$ (i.e., $u_{0}\to u_{\infty}$ as $|x|\to\infty$), local solvability of (1.1) for $n=3$ is proved in \cite[Theorem 5.2]{Miyakawa82} by means of the Oseen semigroup. In the paper, the problem (1.1) is reduced to an initial-boundary problem for decaying data by shifting $u$ by a constant $u_{\infty}$. Our analysis is based on the $L^{\infty}$-estimates of the Stokes semigroup which yields a local-in-time solvability of (1.1) without conditions for $u_0$ at space infinity.

The $L^{\infty}$-theory for the Cauchy problem of the Navier-Stokes equations is developed by Knightly \cite{K1}, \cite{K2}, Cannon and Knightly \cite{CK}, Cannone \cite{C} (\cite{CM}) and Giga et al. \cite{GIM}. For the whole space, mild solutions on $L^{\infty}$ are smooth and satisfy (1.1) in a classical sense \cite{GIM}. For a half space, mild solutions on $L^{\infty}$ are constructed in \cite{Sl03} (see also \cite{Mar09}, \cite{BJ}). There are a few results on solvability of the exterior problem for non-decaying data. In \cite{GMZ}, unique existence of continuous solutions of (1.1) for $n\geq 3$ is proved for non-decaying and H\"older continuous initial data. The result is extended in \cite{Mar2014} for merely bounded $u_0\in L^{\infty}_{\sigma}$ and $n\geq 3$ by using $L^{\infty}$-estimates of the Stokes semigroup \cite{AG1}, \cite{AG2}. Note that mild solutions on $L^{\infty}_{\sigma}$ are not constructed without the composition operator $\overline{S(t)\p\D}$. We proved the unique existence of mild solutions on $ L^{\infty}_{\sigma}$, which in particular yields a local-in-time solvability for $n=2$. The integral form (1.3) is fundamental for studying solutions of (1.1). We expect that mild solutions on $L^{\infty}$ are sufficiently smooth and satisfy (1.1) in a classical sense.\\

The article is organized as follows. In Section 2, we extend the composition operator $S(t)\p\D$ to a bounded operator from $W^{1,\infty}_{0}$ to $L^{\infty}_{\sigma}$ by approximation as we did the Stokes semigroup in \cite{AG2}. We extend $S(t)\p\D$ as a solution operator $F\longmapsto v(\cdot,t)$ for solutions $(v,q)$ of the Stokes equations for $v_0=\p\D\ F$. Note that $v_0=\p\D\ F$ for $F\in W^{1,\infty}_{0}$ is not an element of $L^{\infty}$ in general since the projection $\p$ is not bounded on $L^{\infty}$. We understand $v_0=\p\D\ F$ as distribution by using the fact that $\nabla \p\varphi\in L^{1}$ for $\varphi\in C_{c}^{\infty}$ (Lemma A.1). We approximate $F\in W^{1,\infty}_{0}$ by a sequence $\{F_m\}\subset C_{c}^{\infty}$ locally uniformly in $\overline{\Omega}$ and obtain a unique extension $\overline{S(t)\p\D}: F\longmapsto v(\cdot,t)$ by a limit $v$ of the sequence $v_m=S(t)\p\D\ F_m$.

In Section 3, we prove Theorem 1.1. We approximate initial data $u_0\in L^{\infty}_{\sigma}$ by a sequence $\{u_{0,m}\}\subset C_{c,\sigma}^{\infty}$ satisfying $u_{0,m}\to u_0$ a.e. in $\Omega$ and $||u_{0,m}||_{\infty}\leq C||u_0||_{\infty}$. Since the property of mild solutions (1.4) may not follow from a direct iteration argument on $L^{\infty}_{\sigma}$, we construct mild solutions by approximation. We apply an existence theorem on $C_{0,\sigma}$ \cite{A4} and construct a sequence of mild solutions $u_m\in C([0,T]; C_{0,\sigma})$ satisfying (1.4)-(1.6) for $u_{0,m}\in C_{0,\sigma}$. We prove that $u_m$ subsequently converges to a mild solution $u$ for $u_0\in L^{\infty}_{\sigma}$ locally uniformly in $\overline{\Omega}\times (0,T]$. 

In Appendix A, we show that $\nabla \p\varphi\in L^{1}$ for $\varphi\in C_{c}^{\infty}$ by means of the layer potential.

\vspace{10pt}

\section{An extension of the composition operator}

\vspace{5pt}

In this section, we prove that the composition operator $S(t)\p\partial$ is uniquely extendable to a bounded operator from $W^{1,\infty}_{0}$ to $L^{\infty}_{\sigma}$. We prove unique existence of solutions of the Stokes equations for initial data $v_0=\p\partial f$, $f\in W^{1,\infty}_{0}$, and extend the composition as a  solution operator $S(t)\p\partial:f\longmapsto v(\cdot,t)$. In what follows, $\partial=\partial_{j}$ indiscriminately denotes the spatial derivatives for $j=1,\cdots,n$.

\vspace{5pt}

\subsection{The Stokes system} We consider the Stokes equations,
\begin{equation*}
\begin{aligned}
\partial_{t}v-\Delta v+\nabla q&=0\quad \textrm{in}\ \Omega\times (0,T),   \\ 
\D\ v&=0\quad \textrm{in}\ \Omega\times (0,T), \\
v&=0\quad \textrm{on}\ \partial\Omega\times (0,T),   \\
v&=v_0\quad\hspace{-4pt} \textrm{on}\ \Omega\times \{t=0\}.
\end{aligned}
\tag{2.1}
\end{equation*}
We set the norm
\begin{equation*}
N(v,q)(x,t)=\bigl|v(x,t)\bigr|+t^{\frac{1}{2}}\bigl|\nabla v(x,t)\bigr|+t\bigl|\nabla^{2}v(x,t)\bigr|+t\bigl|\partial_{t}v(x,t)\bigr|+t\bigl|\nabla q(x,t)\bigr|.
\end{equation*} 
Let $d(x)$ denote the distance from $x\in \Omega$ to $\partial\Omega$. Let $(v, \nabla q)\in C^{2+\mu,1+\frac{\mu}{2}}(\overline{\Omega}\times (0,T])\times C^{\mu,\frac{\mu}{2}}(\overline{\Omega}\times (0,T]),\ \mu\in (0,1)$, satisfy the equations and the boundary condition of $(2.1)$. We say that $(v,q)$ is a solution of (2.1) for $v_0=\mathbb{P}\partial f$, $f\in W^{1,\infty}_{0}(\Omega)$, if  
\begin{equation*}
\sup_{0<t\leq T}\Bigg\{t^{\gamma}\big\|N(v,q)\big\|_{\infty}(t)+
t^{\gamma+\frac{1}{2}}\big\|d \nabla q\big\|_{\infty}(t)\Bigg\}<\infty, \tag{2.2}   
\end{equation*}
for some $\gamma\in [0,1/2)$ and 
\begin{equation*}
\int_{0}^{T}\int_{\Omega}\big(v\cdot(\partial_{t}\varphi+\Delta \varphi)-\nabla q\cdot \varphi\big)\textrm{d}x\textrm{d}t =\int_{\Omega}f\cdot \partial \mathbb{P}\varphi_{0}\dd x      \tag{2.3}
\end{equation*}
for all $\varphi\in C^{\infty}_{c}(\Omega\times [0,T))$ with $\varphi_{0}(x)=\varphi(x,0)$. The left-hand side is finite since $\varphi(\cdot,t)$ is supported in $\Omega$ and $\gamma<1/2$. The right-hand side is finite since $\partial \p\varphi_0$ is integrable in $\Omega$ for $\varphi_0\in C_{c}^{\infty}(\Omega)$ by Lemma A.1. As explained later in Remarks 2.9 (i), the operator $\mathbb{P}\partial$ is uniquely extendable for $f\in W^{1,\infty}_{0}$ and we are able to define $\mathbb{P}\partial f$ in the sense of distribution. The goal of this section is to prove:

\vspace{5pt}

\begin{thm}
Let $\Omega$ be an exterior domain with $C^{3}$-boundary. Let $T>0$. For $v_0=\p\partial f$, $f\in W^{1,\infty}_{0}(\Omega)$, there exists a unique solution $(v,q)$ of (2.1) satisfying 
\begin{equation*}
\sup_{0<t\leq T}\Big\{ t^{\gamma}\big\|N(v,q)\big\|_{\infty}(t)
+t^{\gamma+\frac{1}{2}}\big\|d \nabla q\big\|_{\infty}(t)\Big\}
\leq C\big\|f\big\|_{\infty}^{1-\alpha}\big\|f\big\|_{1,\infty}^{\alpha},     \tag{2.4}
\end{equation*}
for $\alpha\in (0,1)$ with $\gamma=(1-\alpha)/2$ and some constant C, depending on $\alpha$, $T$ and $\Omega$.
\end{thm}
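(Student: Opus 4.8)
The plan is to prove the estimate (2.4) first for smooth, compactly supported $f$, then to obtain the general case by approximation, and finally to establish uniqueness separately. The basic device is the semigroup decomposition $S(t)=S(t/2)S(t/2)$: estimate (1.2) places the first factor in $L^{\infty}_{\sigma}$, after which the analytic smoothing of $S$ upgrades it to bounds on every term of $N(v,q)$. Concretely, for $f\in C^{\infty}_{c}(\Omega)$ we have $f\in C^{1}_{0}\cap W^{1,2}$, so $v(t)=S(t)\p\partial f$ is a classical solution of (2.1), and by (1.2),
\[
\|v(t/2)\|_{\infty}=\|S(t/2)\p\partial f\|_{\infty}\le C\,t^{-\gamma}\|f\|_{\infty}^{1-\alpha}\|\nabla f\|_{\infty}^{\alpha},\qquad \gamma=\tfrac{1-\alpha}{2}.
\]
Writing $v(t)=S(t/2)v(t/2)$ and applying to $g=v(t/2)$, $\tau=t/2$ the analytic-semigroup estimates $\|\nabla^{k}S(\tau)g\|_{\infty}\le C\tau^{-k/2}\|g\|_{\infty}$ ($k=1,2$) and $\|\partial_{t}S(\tau)g\|_{\infty}\le C\tau^{-1}\|g\|_{\infty}$, together with the associated-pressure estimates $\tau\|\nabla q\|_{\infty}+\tau^{1/2}\|d\nabla q\|_{\infty}\le C\|g\|_{\infty}$ of \cite{AG2}, each term of $N(v,q)$ and the weighted term $t^{1/2}\|d\nabla q\|_{\infty}$ picks up exactly the factor $t^{-\gamma}$ relative to its natural weight. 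Multiplying by $t^{\gamma}$ yields (2.4) for smooth data with $C$ independent of $f$.

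\textbf{Approximation and compactness.} For general $f\in W^{1,\infty}_{0}$ I construct $f_{m}\in C^{\infty}_{c}(\Omega)$ with $f_{m}\to f$ locally uniformly in $\overline{\Omega}$ and $\|f_{m}\|_{\infty}\le C\|f\|_{\infty}$, $\|f_{m}\|_{1,\infty}\le C\|f\|_{1,\infty}$. Since $f$ is Lipschitz and vanishes on $\partial\Omega$ we have $|f(x)|\le \|\nabla f\|_{\infty}\,d(x)$, so excising a boundary layer $\{d<2/m\}$ and the far field $\{|x|>2m\}$ by a cutoff, then mollifying at a scale $\ll 1/m$, keeps the $W^{1,\infty}$-norm under control with support compact in $\Omega$. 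By the first step the functions $v_{m}=S(t)\p\partial f_{m}$ satisfy (2.4) uniformly in $m$. Interior and boundary Schauder estimates (available since $\partial\Omega\in C^{3}$ and $v_{m}$ vanishes on $\partial\Omega$) bound $v_{m}$ in $C^{2+\mu,1+\mu/2}$ and $\nabla q_{m}$ in $C^{\mu,\mu/2}$ on compact subsets of $\overline{\Omega}\times(0,T]$, so a subsequence converges there in $C^{2,1}$ to a limit $(v,\nabla q)$ that solves (2.1) classically, satisfies the boundary condition, and inherits (2.4).

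\textbf{Weak form and identification of the data.} Testing the Stokes system for $f_{m}$ against $\varphi\in C^{\infty}_{c}(\Omega\times[0,T))$ and integrating by parts in time and space makes the left-hand side of (2.3) equal to $-\int_{\Omega}(\p\partial f_{m})\cdot\varphi_{0}\,\dd x$; moving $\p$ (self-adjoint) and $\partial$ (integration by parts, no boundary term as $f_{m}$ has compact support) onto $\varphi_{0}$ rewrites this as $\int_{\Omega}f_{m}\cdot\partial\p\varphi_{0}\,\dd x$, i.e.\ (2.3) holds for $f_{m}$. I then pass to the limit: the left-hand side converges by the uniform bound (2.2), whose singularity $t^{-\gamma}$ is integrable since $\gamma<1/2$, together with the local uniform convergence of $(v_{m},\nabla q_{m})$; the right-hand side converges because $\partial\p\varphi_{0}\in L^{1}$ by Lemma A.1 while $\{f_{m}\}$ is uniformly bounded and converges locally uniformly. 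Hence the limit satisfies (2.3), which identifies its initial datum as $\p\partial f$.

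\textbf{Uniqueness --- the main obstacle.} Given two solutions with the same data, their difference $(v,q)$ satisfies (2.2) and (2.3) with vanishing right-hand side, so its distributional initial trace is zero. For $0<s<t$ one has $v(s)\in L^{\infty}_{\sigma}$, and uniqueness of the Stokes Cauchy problem on $L^{\infty}_{\sigma}$ forces $v(t)=S(t-s)v(s)$. The delicate point, where I expect the genuine work, is that the datum is only a distribution and (2.2) permits a $t^{-\gamma}$ blow-up at $t=0$, so one cannot invoke semigroup uniqueness directly at the initial time. I would close the argument by a duality computation: testing (2.3) against the smooth compactly supported solution of the backward adjoint Stokes problem shows $v(s)\to0$ weakly-star as $s\to0$, and combined with $v(t)=S(t-s)v(s)$ and the weak-star continuity of $S$ this gives $v\equiv0$.
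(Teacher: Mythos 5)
Your existence argument follows the same route as the paper: an a priori bound for $f\in C^{\infty}_{c}$, approximation of $f\in W^{1,\infty}_{0}$ by compactly supported smooth functions with controlled $W^{1,\infty}$-norms, local Schauder compactness on $\overline{\Omega}\times(0,T]$, and passage to the limit in the weak form (2.3). Two minor differences are harmless or even pleasant: the paper simply cites the a priori estimate (2.9) from \cite{A3} rather than rederiving it from (1.2) via $S(t)=S(t/2)S(t/2)$ as you do (your derivation needs, in addition, the pressure bounds $\tau\|\nabla q\|_{\infty}+\tau^{1/2}\|d\nabla q\|_{\infty}\lesssim\|g\|_{\infty}$ from \cite{AG2}, which you correctly invoke); and your approximation lemma, based on $|f(x)|\leq\|\nabla f\|_{\infty}\,d(x)$ and a boundary-layer cutoff, is a more elementary substitute for the paper's Lemma 2.5 (star-shaped dilation plus partition of unity) and does give (2.7).

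The genuine gap is uniqueness. The paper does not prove it by duality: it invokes Proposition 2.4, a uniqueness theorem for solutions of the Stokes equations in the singular class (2.2) with vanishing weak form, which is itself a nontrivial result obtained by a blow-up argument reducing to the half-space theorem \cite[Theorem 5.1]{A3}. Your proposed closing step does not work as written. First, there is no ``smooth compactly supported solution of the backward adjoint Stokes problem'': adjoint Stokes solutions are not compactly supported, so they are not admissible test functions in (2.3), and for non-solenoidal compactly supported test functions the pressure term $\nabla q\cdot\varphi$ must be retained. Second, even granting that $\int_{\Omega}v(s)\cdot\psi\,\dd x\to 0$ for every fixed $\psi\in C^{\infty}_{c}(\Omega)$ (which does follow from (2.3) with a temporal cutoff, since the singularities $t^{-\gamma}$ and $t^{-\gamma-1/2}/d_{0}$ are integrable), the identity $v(t)=S(t-s)v(s)$ requires you to pass to the limit in $\int_{\Omega}v(s)\cdot S^{*}(t-s)\phi\,\dd x$, where $\|v(s)\|_{\infty}$ may grow like $s^{-\gamma}$ while $S^{*}(t-s)\phi$ is only in $L^{1}$ and not compactly supported; truncating its tail produces an error of size $s^{-\gamma}\cdot\epsilon_{R}$ that you cannot send to zero by choosing $R$ after $s$. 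This is exactly the difficulty that forces the paper to use the stronger uniqueness result of Proposition 2.4 rather than the semigroup/duality argument that suffices for bounded data. To complete your proof you would need either to import that proposition or to supply the blow-up argument reducing to the half-space.
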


\vspace{5pt}
Theorem 2.1 implies the following: 
\vspace{5pt}

\begin{thm}
The composition operator $S(t)\p\partial$ is uniquely extendable to a bounded operator $\overline{S(t)\p\partial}$ from $W^{1,\infty}_{0}(\Omega)$ to $L^{\infty}_{\sigma}(\Omega)$ together with the estimate
\begin{equation*}
\sup_{0< t\leq T}t^{\gamma+\frac{|k|}{2}+s}\Big\|\partial_{t}^{s}\partial_{x}^{k}\overline{S(t)\p \partial} f\Big\|_{\infty}\leq C\big\|f\big\|_{\infty}^{1-\alpha}\big\| f\big\|_{1,\infty}^{\alpha},  \tag{2.5}
\end{equation*}
for $f\in W^{1,\infty}_{0}(\Omega)$, $0\leq 2s+|k|\leq 2$ and $\alpha\in (0,1)$ with $\gamma=(1-\alpha)/2$.
\end{thm}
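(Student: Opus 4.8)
The goal is to upgrade Theorem 2.1, which guarantees a unique solution $(v,q)$ of the Stokes system for data $v_0=\p\partial f$ with $f\in W^{1,\infty}_0$, into a statement about the operator $S(t)\p\partial$ itself. Theorem 2.2 asserts that this composition extends to a bounded operator $\overline{S(t)\p\partial}\colon W^{1,\infty}_0\to L^\infty_\sigma$ satisfying the full family of derivative estimates (2.5). So the proof is essentially a matter of reading off the operator from the solution map and verifying that the quantitative bound (2.4) delivers (2.5).

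The plan is as follows. First I would define the candidate extension by $\overline{S(t)\p\partial}f:=v(\cdot,t)$, where $(v,q)$ is the unique solution of (2.1) furnished by Theorem 2.1. Uniqueness in Theorem 2.1 makes this well-defined, and linearity of the Stokes system together with uniqueness makes $f\mapsto v(\cdot,t)$ linear. To see that this genuinely extends the original composition, I would check that when $f\in C^1_0\cap W^{1,2}$ (or $f\in C_c^\infty$), the classical solution $S(t)\p\partial f$ is itself a solution in the sense of (2.2)--(2.3): it satisfies the Stokes equations and boundary condition, obeys the weak form (2.3) by integration by parts against test functions $\varphi$, and meets the growth bound (2.2) via the estimate (1.2). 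By the uniqueness clause, the two solutions coincide, so $\overline{S(t)\p\partial}$ agrees with $S(t)\p\partial$ on the dense subspace. Uniqueness of the extension is then immediate from uniqueness of the solution.

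Next I would extract the estimates. For the case $2s+|k|=0$, the $|v|$ term in $N(v,q)$ gives directly $t^\gamma\|\overline{S(t)\p\partial}f\|_\infty\le C\|f\|_\infty^{1-\alpha}\|f\|_{1,\infty}^\alpha$, which is (2.5) with $s=k=0$. For $|k|=1$ the $t^{1/2}|\nabla v|$ term yields the factor $t^{\gamma+1/2}$, and for $2s+|k|=2$ the terms $t|\nabla^2 v|$ and $t|\partial_t v|$ supply the factor $t^{\gamma+1}$; in each case the right-hand side of (2.4) is exactly the bound claimed in (2.5). Thus (2.5) is a termwise consequence of (2.4), with $\gamma=(1-\alpha)/2$ matched in both statements. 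Finally, to land in $L^\infty_\sigma$ rather than merely $L^\infty$, I would note that $v(\cdot,t)$ is divergence-free (second equation of (2.1)) and vanishes on $\partial\Omega$ (third equation), which by the characterization of $L^\infty_\sigma$ for exterior domains (the vanishing normal trace of bounded divergence-free fields, cited from \cite{AG2}) places $v(\cdot,t)\in L^\infty_\sigma$.

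The main obstacle is not in the estimate bookkeeping, which is routine once Theorem 2.1 is in hand, but in the boundedness/extension step: one must be careful that for general $f\in W^{1,\infty}_0$ the symbol $\p\partial f$ is only meaningful as a distribution (as the excerpt stresses, $\p$ is unbounded on $L^\infty$ and $\p\partial f\notin L^\infty$ in general), so the ``extension'' is defined through the solution operator and the weak form (2.3), not by first making sense of $S(t)$ applied to an $L^\infty$ datum. I would therefore emphasize that density of $C_c^\infty$ (or $C^1_0\cap W^{1,2}$) in $W^{1,\infty}_0$ in the relevant topology, combined with the uniform bound (2.4), is what certifies that the solution-operator definition is the unique bounded extension of the classical composition; the approximation argument for this is precisely the one carried out in Section 2 following Theorem 2.1, so here it suffices to invoke it and read off (2.5) from (2.4).
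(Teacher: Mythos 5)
Your proposal is correct and follows essentially the same route as the paper: the paper's proof of Theorem 2.2 is precisely the observation that the solution operator $f\longmapsto v(\cdot,t)$ furnished by Theorem 2.1 defines the extension, with well-definedness and uniqueness coming from Proposition 2.4, the estimate (2.5) read off termwise from the norm $N(v,q)$ in (2.4), and the identification with $S(t)\p\partial$ on $C^{\infty}_{c}$ via the approximation of Lemma 2.5. Your caveat that the ``density'' of $C^{\infty}_{c}$ in $W^{1,\infty}_{0}$ is only in the weak sense of locally uniform convergence with uniform $W^{1,\infty}$ bounds (so that uniqueness of the extension rests on the uniqueness theorem for the Stokes system rather than on norm density) is exactly the point the paper's argument turns on.
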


\vspace{10pt}

\subsection{H\"older estimates and uniqueness} In order to prove Theorem 2.1, we recall local H\"older estimates and a uniqueness result for the Stokes equations. In the subsequent section, we give a proof for Theorem 2.1 by approximation.\\
 
We set the H\"older semi-norm
\begin{equation*}
\Big[f\Big]^{(\mu,\frac{\mu}{2})}_{Q}=\sup_{t\in (0,T]}\Big[f\Big]^{(\mu)}_{\Omega}(t)+\sup_{x\in \Omega}\Big[f\Big]^{(\frac{\mu}{2})}_{(0,T]}(x),\quad \mu\in (0,1),
\end{equation*}
for $Q=\Omega\times(0,T]$. We set 
\begin{equation*}
N=\sup_{\delta\leq t\leq T}\big\|N(v,q)\big\|_{L^{\infty}(\Omega)}(t)
\end{equation*} 
for solutions $(v,q)$ of (2.1). The following local H\"older estimate is proved in \cite[Proposition 3.2 and Theorem 3.4]{AG1} based on the Schauder estimates for the Stokes equations \cite{Sl07} (\cite{Sl76}, \cite{Sl06}). 
  
\vspace{5pt}

\begin{prop}
Let $\Omega$ be an exterior domain with $C^3$-boundary.  

\noindent
(i) (Interior estimates) For $\mu\in(0,1)$, $\delta >0$, $T>0$, $R>0$, there exists a constant $C=C\bigl(\mu,\delta,T, R,d\bigr)$ such that 
\begin{equation}
\Big[\nabla^{2}v\Big]^{(\mu,\frac{\mu}{2})}_{Q'}+\Big[v_t\Big]^{(\mu,\frac{\mu}{2})}_{Q'}+\Big[\nabla q\Big]^{(\mu,\frac{\mu}{2})}_{Q'}\leq CN \tag{2.6}
\end{equation}
holds for all solutions $(v,q)$ of (2.1) for $Q'=B_{x_0}(R)\times(2\delta,T]$ and $x_0\in\Omega$ satisfying $\overline{B_{x_0}(R)}\subset\Omega$, where $d$ denotes the distance from $B_{x_0}(R)$ to $\partial\Omega$.

\noindent
(ii) (Estimates up to the boundary) There exists $R_0>0$ such that for $\mu\in(0,1),\ \delta>0$, $T>0$ and $R\leq R_{0}$, there exists a constant $C$ depending on $\mu$, $\delta$, $T$, $R$ and $C^{3}$-regularity of $\partial\Omega$ such that (2.6) holds for all solutions $(v, q)$ of (2.1) for $Q'=\Omega_{x_0,R}\times(2\delta,T]$ and $\Omega_{x_0,R}=B_{x_0}(R)\cap\Omega$, $x_0\in\partial\Omega$.
\end{prop}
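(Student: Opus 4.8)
The plan is to deduce (2.6), in both its interior and boundary forms, from the known Schauder theory for the nonstationary Stokes system (Solonnikov, as cited), the only genuinely new work being the bookkeeping that turns the weighted sup-bound $N$ into usable control of the lower-order terms. The first observation is that on the time interval $[\delta,T]$ the weights $t^{1/2}$ and $t$ occurring in $N(v,q)$ are bounded above and below by positive constants depending only on $\delta$ and $T$, so that
\[
\sup_{\delta\leq t\leq T}\big\|\,|v|+|\nabla v|+|\nabla^{2}v|+|\partial_{t}v|+|\nabla q|\,\big\|_{L^{\infty}(\Omega)}(t)\leq C(\delta,T)\,N .
\]
In particular the plain sup-norms of $v$ and of $\nabla q$ over the solid cylinder above $[\delta,T]$ are dominated by $N$, and these are exactly the quantities that will appear on the right of the local Schauder estimates.

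For (i) I would fix $x_0$ with $\overline{B_{x_0}(R)}\subset\Omega$ and use the positive distance $d$ to insert a slightly larger concentric ball $B_{x_0}(R')\subset\Omega$ with $R<R'$, so that $B_{x_0}(R)\times(2\delta,T]$ sits compactly inside $B_{x_0}(R')\times(\delta,T]$; the backward-in-time margin from $(\delta,T]$ to $(2\delta,T]$ supplies the parabolic collar the estimate needs, which is why the constant must depend on $\delta$, $R$ and $d$. Since the equations are homogeneous ($\D\,v=0$ and no forcing), the interior Schauder estimate for the Stokes system bounds $\big[\nabla^{2}v\big]^{(\mu,\frac{\mu}{2})}$, $\big[\partial_{t}v\big]^{(\mu,\frac{\mu}{2})}$ and $\big[\nabla q\big]^{(\mu,\frac{\mu}{2})}$ on the inner cylinder by the sup-norms of $v$ and $\nabla q$ on the outer one; combined with the display above this yields (2.6).

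For (ii) I would choose $R_{0}$ small enough that, for $x_0\in\partial\Omega$ and $R\leq R_{0}$, the boundary patch inside $B_{x_0}(R)$ is straightened by a $C^{3}$-diffeomorphism. In the new coordinates the Stokes system acquires variable top-order coefficients that are $C^{2}$ in space (hence with finite H\"older norms controlled by the $C^{3}$-norm of $\partial\Omega$), while the no-slip condition $v=0$ becomes the flat Dirichlet condition; the corresponding boundary Schauder estimate for the Stokes system then controls the same three seminorms on $\Omega_{x_0,R}\times(2\delta,T]$ by the sup-norms of $v$ and $\nabla q$ on an enlarged half-cylinder, with a constant depending only on $\mu,\delta,T,R$ and the $C^{3}$-regularity of $\partial\Omega$. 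Transforming back gives (2.6) up to the boundary.

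The delicate point, and the one I would treat most carefully, is the pressure. Since $q$ solves (2.1) only up to an additive function of $t$, only $\nabla q$ is intrinsic, so the Schauder estimates must be read for $q$ normalized (e.g.\ by subtracting its spatial mean over the cylinder), and it is precisely $\big[\nabla q\big]^{(\mu,\frac{\mu}{2})}$ that is controlled. It helps that, taking the divergence of the momentum equation, $q$ is spatially harmonic in the interior, which decouples the interior pressure estimate; near the boundary this decoupling fails and one genuinely relies on the Dirichlet Stokes Schauder estimate. Finally, the blow-up of the constants as $\delta\to0$ is unavoidable but harmless: it only records the degeneration of the weights in $N$ at $t=0$, and away from the initial time the estimate is the standard parabolic one.
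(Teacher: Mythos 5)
Your argument is correct and follows exactly the route the paper itself indicates: Proposition 2.3 is not proved in the text but quoted from \cite[Proposition 3.2 and Theorem 3.4]{AG1}, whose proof rests on the local Schauder estimates for the nonstationary Stokes system of Solonnikov, applied on enlarged cylinders after absorbing the weights of $N(v,q)$ on $[\delta,T]$, with boundary flattening for part (ii) and the pressure normalized up to a function of $t$. Your reconstruction, including the remarks on the harmonicity of $q$ in the interior and the dependence of the constants on $\delta$, $d$, and the $C^{3}$-regularity of $\partial\Omega$, matches that cited argument.
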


\vspace{5pt}

We observe the uniqueness of solutions for (2.1). The uniqueness of the Stokes equations (2.1) for $v_0\in L^{\infty}_{\sigma}$ in an exterior domain is proved based on the uniqueness result in a half space \cite{Sl03} by a blow-up argument; see \cite[Lemma 2.12]{AG2}. In order to prove Theorem 2.1, we need a stronger uniqueness result since solutions of (2.1) for $v_0=\p\partial f$, $f\in W^{1,\infty}_{0}$, may not be bounded at $t=0$. The corresponding uniqueness result for a half space is recently proved in \cite[Theorem 5.1]{A3}. We deduce the result for exterior domains by the same blow-up argument as we did in \cite{AG2}.

\vspace{5pt}

\begin{prop}
Let $\Omega$ be an exterior domain with $C^{3}$-boundary. Let $(v,\nabla q)\in C^{2,1}(\overline{\Omega}\times (0,T])\times C(\overline{\Omega}\times (0,T])$ satisfy the equations and the  boundary condition of (2.1), and (2.2) for some $\gamma\in [0,1/2)$. Assume that 
\begin{equation*}
\int_{0}^{T}\int_{\Omega}\big(v\cdot(\partial_{t}\varphi+\Delta \varphi)-\nabla q\cdot \varphi\big)\textrm{d}x\textrm{d}t =0,   
\end{equation*}
for all $\varphi\in C^{\infty}_{c}(\Omega\times [0,T))$. Then, $v\equiv 0$ and $\nabla q\equiv 0$.
\end{prop}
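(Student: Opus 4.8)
The plan is to argue by contradiction through a blow-up (parabolic rescaling) argument, reducing the exterior problem to the uniqueness theorems on the half space \cite[Theorem 5.1]{A3} and on the whole space. The scheme parallels the proof of \cite[Lemma 2.12]{AG2}; the essential point is that the admissible class (2.2) is tailored so that the weighted quantity $t^{\gamma}N(v,q)$ together with $t^{\gamma+1/2}d\,|\nabla q|$ is invariant under the natural Stokes scaling. This invariance is exactly what allows us to replace the bounded-data input \cite{Sl03} used in \cite{AG2} by the stronger half-space statement \cite[Theorem 5.1]{A3}, which handles solutions that are singular at $t=0$.

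First I would record the scaling. For $\lambda>0$ and $x_{0}\in\overline{\Omega}$, the pair $v^{\lambda}(y,s)=\lambda v(x_{0}+\lambda y,\lambda^{2}s)$, $q^{\lambda}(y,s)=\lambda^{2}q(x_{0}+\lambda y,\lambda^{2}s)$ solves the Stokes equations on $\Omega_{\lambda}=\lambda^{-1}(\Omega-x_{0})$ with $N(v^{\lambda},q^{\lambda})(y,s)=\lambda N(v,q)(x_{0}+\lambda y,\lambda^{2}s)$, the distance function scaling as $d_{\Omega_{\lambda}}=\lambda^{-1}d$, and the homogeneous weak form preserved with vanishing right-hand side. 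Set $M=\sup_{0<t\le T}t^{\gamma}\|N(v,q)\|_{\infty}(t)$, which is finite by (2.2); if $(v,q)\not\equiv0$ then $M>0$. Choose $(x_{k},t_{k})\in\Omega\times(0,T]$ with $t_{k}^{\gamma}N(v,q)(x_{k},t_{k})\ge(1-1/k)M$, put $\lambda_{k}=t_{k}^{1/2}$ and $m_{k}=\lambda_{k}N(v,q)(x_{k},t_{k})$, and normalise
\begin{equation*}
\hat v^{k}(y,s)=\frac{\lambda_{k}}{m_{k}}\,v(x_{k}+\lambda_{k}y,\lambda_{k}^{2}s),\qquad \hat q^{k}(y,s)=\frac{\lambda_{k}^{2}}{m_{k}}\,q(x_{k}+\lambda_{k}y,\lambda_{k}^{2}s).
\end{equation*}
Then $N(\hat v^{k},\hat q^{k})(0,1)=1$, while the weighted bound and the choice of $(x_{k},t_{k})$ give, for all $(y,s)$,
\begin{equation*}
N(\hat v^{k},\hat q^{k})(y,s)=\frac{N(v,q)(x_{k}+\lambda_{k}y,\lambda_{k}^{2}s)}{N(v,q)(x_{k},t_{k})}\le\frac{M(\lambda_{k}^{2}s)^{-\gamma}}{(1-1/k)\,M\,t_{k}^{-\gamma}}=\frac{s^{-\gamma}}{1-1/k},
\end{equation*}
and the $t^{\gamma+1/2}d\,|\nabla q|$ term of (2.2) scales to $s^{\gamma+1/2}$ in the same way, furnishing uniform control of the pressure up to the rescaled boundary.

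Next I would extract a limit. By Proposition 2.3 together with the equations, the families $\hat v^{k}$, $\nabla^{2}\hat v^{k}$, $\partial_{s}\hat v^{k}$, $\nabla\hat q^{k}$ are bounded in a H\"older norm, uniformly in $k$, on every compact subset of $\{s>0\}$ (interior estimate) and up to the rescaled boundary (boundary estimate); Arzel\`a--Ascoli then yields a subsequence converging in $C^{2,1}_{\mathrm{loc}}$ to a limit $(\hat v,\hat q)$. According to $\liminf_{k}d(x_{k})/\lambda_{k}$, the rescaled domains $\Omega_{k}$ converge, using the $C^{3}$-regularity of $\partial\Omega$ which flattens under the zoom, either to $\R$ (when $d(x_{k})/\lambda_{k}\to\infty$) or, after a rotation, to a half space $\mathbb{R}^{n}_{+}$ (when it stays bounded). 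The limit $(\hat v,\hat q)$ solves the homogeneous Stokes system there, satisfies the scale-invariant bound $N(\hat v,\hat q)(y,s)\le s^{-\gamma}$ with the same $\gamma<1/2$, and inherits the homogeneous weak form, i.e. zero initial data. By \cite[Theorem 5.1]{A3} in the half-space case, and by the whole-space uniqueness in the interior case (where $\hat q(\cdot,s)$ is harmonic with $\nabla\hat q$ bounded, hence constant in $y$ by Liouville and then zero by the weak form, so $\hat v$ solves the heat equation with zero data), we obtain $\hat v\equiv0$, $\nabla\hat q\equiv0$. This contradicts $N(\hat v,\hat q)(0,1)=\lim_{k}N(\hat v^{k},\hat q^{k})(0,1)=1$. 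Hence $M=0$, so $v\equiv0$, and the equation gives $\nabla q=\Delta v-\partial_{t}v\equiv0$.

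The main obstacle is twofold. First, one must verify that the normalized weak form with vanishing right-hand side, which encodes the zero initial trace, genuinely passes to the blow-up limit; because $\p$ and the pressure are nonlocal, this requires the full strength of (2.2), in particular the weighted boundary control of $d\,\nabla q$, to prevent concentration of the pressure under rescaling. Second, one must treat the degenerate situation in which the blow-up scale $\lambda_{k}$ does not tend to zero while $x_{k}$ stays bounded, so that the zoom produces no model domain; this case, corresponding to $t_{k}$ bounded away from $0$ where $v$ is a bounded smooth solution, is discarded by the bounded-data uniqueness of \cite[Lemma 2.12]{AG2} applied on the slabs $[\delta,T]$. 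Carrying out this bookkeeping exactly as in \cite{AG2}, now with \cite[Theorem 5.1]{A3} supplying the half-space input for solutions singular at $t=0$, is the crux of the argument.
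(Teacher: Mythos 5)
Your proposal follows exactly the route the paper takes: the paper offers no proof of Proposition 2.4 beyond the remark preceding it, namely that the result follows "by the same blow-up argument as in \cite[Lemma 2.12]{AG2}" with the half-space uniqueness for solutions singular at $t=0$, \cite[Theorem 5.1]{A3}, replacing the bounded-data input \cite{Sl03} --- which is precisely the reduction you carry out, including the correct identification of the scale-invariance of (2.2) and the role of the $t^{\gamma+1/2}\|d\nabla q\|_{\infty}$ weight in passing the zero-initial-trace weak form to the limit. The one place where your bookkeeping is off is the degenerate case: taking near-maximizers of $t^{\gamma}N$ over all of $(0,T]$ does not force $t_k\to 0$, and if $x_k$ stays bounded while $t_k\geq\delta_0>0$ the rescaled domains converge back to an exterior domain (circular), while your proposed fix via uniqueness on $[\delta,T]$ presupposes $v(\delta)=0$; the standard repair is to run the blow-up on shrinking intervals $(0,\tau_m]$ with $\tau_m\to 0$ (so $\lambda_k=t_k^{1/2}\to 0$ automatically), conclude $v\equiv 0$ near $t=0$, and only then propagate forward with the bounded-data uniqueness of \cite[Lemma 2.12]{AG2}.
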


\vspace{5pt}

\subsection{Approximation} We prove Theorem 2.1. We show existence of solutions for the Stokes equations (2.1) for $v_0=\p\partial f$, $f\in W^{1,\infty}_{0}$,  by approximation. We approximate $f\in W^{1,\infty}_{0}$ by elements of $C^{\infty}_{c}$ locally uniformly in $\overline{\Omega}$. 

\vspace{5pt}

\begin{lem}
Let $\Omega$ be an exterior domain with Lipschitz boundary. There exist constants $C_1$, $C_2$ such that for $f\in W^{1,\infty}_{0}(\Omega)$ there exists a sequence of functions $\{f_m\}_{m=1}^{\infty}\subset C_{c}^{\infty}(\Omega)$ such that 
\begin{equation*}
\begin{aligned}
&\big\|f_{m}\big\|_{\infty}\leq C_1\big\|f\big\|_{\infty}, \\
&\big\|\nabla f_{m}\big\|_{\infty}\leq C_2\big\|f\big\|_{1,\infty},  \\
&f_m\to f\quad \textrm{locally uniformly in}\ \overline{\Omega}\quad \textrm{as}\ m\to\infty.
\end{aligned}
\tag{2.7}
\end{equation*}
\end{lem}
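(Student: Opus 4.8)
The plan is to build $f_m$ from $f$ by three operations: a cutoff $\chi_m$ removing the far field, a cutoff $\eta_m$ removing a shrinking neighborhood of $\partial\Omega$, and a final mollification that gains smoothness while keeping the support compactly inside $\Omega$. The observation that makes the boundary cutoff harmless is that a function in $W^{1,\infty}_{0}$ decays linearly at the boundary: since $f$ vanishes on $\partial\Omega$ and $\nabla f\in L^{\infty}$, its zero extension to $\mathbb{R}^{n}$ is Lipschitz with constant $\|\nabla f\|_{\infty}$, so comparing $f(x)$ with the value at a nearest boundary point gives the pointwise bound $|f(x)|\le C\|\nabla f\|_{\infty}\,d(x)$, with $C$ depending only on the Lipschitz character of $\partial\Omega$. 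This linear decay is exactly what compensates the $1/\varepsilon_m$ blow-up of the boundary cutoff's gradient.

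Concretely, I would fix sequences $\varepsilon_m\downarrow 0$ and $R_m\uparrow\infty$ and choose Lipschitz cutoffs $\eta_m,\chi_m$ with $0\le \eta_m,\chi_m\le 1$, $\eta_m\equiv 0$ on $\{d\le \varepsilon_m\}$ and $\eta_m\equiv 1$ on $\{d\ge 2\varepsilon_m\}$, $\chi_m\equiv 1$ on $B_{R_m}$ and $\chi_m\equiv 0$ outside $B_{2R_m}$, with $|\nabla\eta_m|\le C/\varepsilon_m$ and $|\nabla\chi_m|\le C/R_m$; the $\eta_m$ may be taken as $\theta(d/\varepsilon_m)$ for a fixed profile $\theta$, using that $d$ is $1$-Lipschitz. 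Set $g_m=\eta_m\chi_m f$, a Lipschitz function supported in the compact set $\{d\ge \varepsilon_m\}\cap \overline{B_{2R_m}}\subset\Omega$. Then $\|g_m\|_{\infty}\le\|f\|_{\infty}$, and writing $\nabla g_m=\eta_m\chi_m\nabla f+(\nabla\eta_m)\chi_m f+\eta_m(\nabla\chi_m)f$ the three terms are bounded respectively by $\|\nabla f\|_{\infty}$, by $(C/\varepsilon_m)\cdot 2\varepsilon_m\|\nabla f\|_{\infty}$ (on $\{\nabla\eta_m\neq 0\}$ one has $d\le 2\varepsilon_m$, hence $|f|\le 2\varepsilon_m\|\nabla f\|_{\infty}$), and by $(C/R_m)\|f\|_{\infty}$. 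Thus $\|\nabla g_m\|_{\infty}\le C(\|\nabla f\|_{\infty}+\|f\|_{\infty})\le C\|f\|_{1,\infty}$ uniformly in $m$.

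Finally I would mollify: put $f_m=g_m\ast\rho_{\delta_m}$ with a standard mollifier $\rho_{\delta}$ and $\delta_m<\varepsilon_m/2$, so that $f_m\in C_c^{\infty}(\Omega)$ (the support stays inside $\{d\ge \varepsilon_m/2\}\cap \overline{B_{2R_m+\delta_m}}$). Since convolution with a probability kernel does not increase the sup-norm of a function or of its (weak) gradient, the bounds $\|f_m\|_{\infty}\le\|f\|_{\infty}$ and $\|\nabla f_m\|_{\infty}=\|(\nabla g_m)\ast\rho_{\delta_m}\|_{\infty}\le\|\nabla g_m\|_{\infty}\le C\|f\|_{1,\infty}$ yield $(2.7)$ with $C_1=1$, $C_2=C$. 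For local uniform convergence on a compact $K\subset\overline{\Omega}$: for $m$ large $\chi_m\equiv 1$ near $K$, so $|f-g_m|=|f|\,|1-\eta_m|$ is supported in $\{d\le 2\varepsilon_m\}$, where it is $\le 2\varepsilon_m\|\nabla f\|_{\infty}\to 0$, while $|g_m-f_m|\le\|\nabla g_m\|_{\infty}\,\delta_m\le C\|f\|_{1,\infty}\,\delta_m\to 0$; combining these gives $f_m\to f$ uniformly on $K$.

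I expect the only genuinely delicate point to be the boundary-cutoff estimate, i.e. justifying $|f(x)|\le C\|\nabla f\|_{\infty}d(x)$ near the Lipschitz boundary and thereby absorbing the $1/\varepsilon_m$ factor; once this linear decay is in hand, the far-field cutoff and the mollification are routine, and the constants $C_1,C_2$ depend only on the fixed cutoff profiles and the Lipschitz character of $\partial\Omega$, not on $f$ or $m$.
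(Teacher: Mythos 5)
Your argument is correct, but it is not the route the paper takes. The paper proves Lemma 2.5 by splitting the problem into the whole space (Proposition 2.6, a plain far-field cutoff) and a bounded Lipschitz domain near the obstacle (Proposition 2.7); there, the support is pushed into $\Omega$ not by a distance-function cutoff but by a dilation $f_\lambda(x)=f(x/\lambda)$ toward an interior point of star-shaped pieces, glued by a partition of unity and controlled with the Poincar\'e inequality, followed by mollification. You instead cut off directly on the level sets of $d(x)=\mathrm{dist}(x,\partial\Omega)$ and absorb the $1/\varepsilon_m$ from $\nabla\eta_m$ via the linear decay $|f(x)|\le C\|\nabla f\|_{\infty}\,d(x)$. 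That decay estimate is the one point you should make fully explicit: it is not simply that ``the zero extension is Lipschitz with constant $\|\nabla f\|_{\infty}$'' (that is only true up to a constant depending on the quasiconvexity of a Lipschitz domain); the clean justification is to work in a boundary chart where $\Omega$ is the region above an $L$-Lipschitz graph, integrate $\nabla f$ along the vertical segment from $x$ down to the graph, and use that this segment has length at most $\sqrt{1+L^2}\,d(x)$ — which suffices because the set $\{d\le 2\varepsilon_m\}$ is eventually contained in the chart neighborhood. With that point nailed down, your proof is complete and arguably more direct than the paper's: it avoids the star-shaped covering, the partition of unity, and the Poincar\'e inequality, and it gives $C_1=1$; what the paper's dilation argument buys in exchange is that it never needs any pointwise decay of $f$ at the boundary, only that $f$ vanishes there, so the support is relocated into $\Omega$ ``for free'' by the scaling.
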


\vspace{5pt}

The proof of Lemma 2.5 is reduced to the whole space and bounded domains.

\vspace{5pt}

\begin{prop}
The statement of Lemma 2.5 holds when $\Omega=\mathbb{R}^{n}$ with $C_1=1$.
\end{prop}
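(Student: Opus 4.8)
The plan is to exploit the absence of a boundary: when $\Omega=\mathbb{R}^{n}$ we have $W^{1,\infty}_{0}(\mathbb{R}^{n})=W^{1,\infty}(\mathbb{R}^{n})$ (the vanishing condition on $\partial\Omega$ is vacuous), and every such $f$ coincides a.e.\ with a globally Lipschitz, hence continuous, function. I would construct $f_{m}$ by first mollifying $f$ to gain smoothness without increasing the sup norm, then multiplying by a smooth cutoff to gain compact support, and finally selecting the two parameters diagonally.

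First I would fix a standard mollifier $\rho\in C^{\infty}_{c}(\mathbb{R}^{n})$ with $\rho\geq 0$ and $\int\rho\,\dd x=1$, and set $\rho_{\varepsilon}(x)=\varepsilon^{-n}\rho(x/\varepsilon)$. Since $\|\rho_{\varepsilon}\|_{L^{1}}=1$, Young's inequality gives $\|\rho_{\varepsilon}*f\|_{\infty}\leq\|f\|_{\infty}$, and from $\nabla(\rho_{\varepsilon}*f)=\rho_{\varepsilon}*\nabla f$ also $\|\nabla(\rho_{\varepsilon}*f)\|_{\infty}\leq\|\nabla f\|_{\infty}$. Next I would fix $\chi\in C^{\infty}_{c}(\mathbb{R}^{n})$ with $0\leq\chi\leq 1$, $\chi\equiv 1$ on $B_{1}$ and $\mathrm{supp}\,\chi\subset B_{2}$, and put $\chi_{R}(x)=\chi(x/R)$, so that $\|\nabla\chi_{R}\|_{\infty}\leq C/R$. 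Setting $f_{\varepsilon,R}:=\chi_{R}\,(\rho_{\varepsilon}*f)\in C^{\infty}_{c}(\mathbb{R}^{n})$, the sup norm is not increased, $\|f_{\varepsilon,R}\|_{\infty}\leq\|\rho_{\varepsilon}*f\|_{\infty}\leq\|f\|_{\infty}$, which already yields $C_{1}=1$; and the product rule gives
\[
\|\nabla f_{\varepsilon,R}\|_{\infty}\leq\|\nabla\chi_{R}\|_{\infty}\|\rho_{\varepsilon}*f\|_{\infty}+\|\chi_{R}\|_{\infty}\|\nabla(\rho_{\varepsilon}*f)\|_{\infty}\leq\frac{C}{R}\|f\|_{\infty}+\|\nabla f\|_{\infty}.
\]
For $R\geq 1$ the right-hand side is bounded by $C_{2}\|f\|_{1,\infty}$ with, say, $C_{2}=C+1$.

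Finally I would run a diagonal argument, choosing $\varepsilon_{m}\to 0$ and $R_{m}\to\infty$ (for instance $R_{m}=m$) and setting $f_{m}=f_{\varepsilon_{m},R_{m}}$. On any compact set $K\subset\mathbb{R}^{n}$, for all large $m$ we have $\chi_{R_{m}}\equiv 1$ on $K$, so $f_{m}=\rho_{\varepsilon_{m}}*f$ there; since $f$ is uniformly continuous, $\rho_{\varepsilon_{m}}*f\to f$ uniformly on $K$, giving the required locally uniform convergence on $\overline{\Omega}=\mathbb{R}^{n}$. For the whole space there is no genuine obstacle; the only points that require care are that $C_{1}$ be exactly $1$—which is why the mollifier is nonnegative with unit mass and the cutoff is valued in $[0,1]$, both operations being norm-nonincreasing—and that the cutoff contribution to the gradient, of size $C\|f\|_{\infty}/R_{m}$, be absorbed into a single constant uniformly in $m$, which is secured by keeping $R_{m}\geq 1$.
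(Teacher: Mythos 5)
Your proof is correct and follows essentially the same route as the paper, which simply sets $f_m=f\theta_m$ for a cutoff $\theta_m$ at scale $m$ and notes that (2.7) holds with $C_1=1$. Your additional mollification step is a welcome refinement: the paper's $f\theta_m$ is only Lipschitz with compact support, so strictly speaking a mollification (which, as you observe, does not increase either norm) is needed to land in $C_c^{\infty}(\mathbb{R}^{n})$ as the statement of Lemma 2.5 requires.
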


\begin{proof}
We cutoff the function $f\in W^{1,\infty}(\R)$. Let $\theta\in C^{\infty}_{c}[0,\infty)$ be a cut-off function satisfying $\theta\equiv 1$ in $[0,1]$, $\theta\equiv 0$ in $[2,\infty)$ and $0\leq \theta\leq 1$. We set $\theta_{m}(x)=\theta(|x|/m)$ for $m\geq 1$ so that $\theta_{m}\equiv 1$ for $|x|\leq m$ and $\theta_{m}\equiv 0$ for $|x|\geq 2m$. Then, $f_m=f\theta_{m}$ satisfies (2.7). 
\end{proof}

\vspace{5pt}

\begin{prop}
Let $\Omega$ be a bounded domain with Lipschitz boundary. There exists a constant $C_3$ such that for $f\in W^{1,\infty}_{0}(\Omega)$ there exists a sequence of functions $\{f_m\}_{m=1}^{\infty}\subset C^{\infty}_{c}(\Omega)$ such that 
\begin{equation*}
\begin{aligned}
&\big\|\nabla f_m\big\|_{\infty}
\leq C_3\big\| \nabla f\big\|_{\infty}\\
f_m&\to f\quad \textrm{uniformly in}\ \overline{\Omega}\quad \textrm{as}\ m\to\infty.
\end{aligned}
\tag{2.8}
\end{equation*}  
\end{prop}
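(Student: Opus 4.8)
The plan is to produce the approximating sequence by the classical cut-off-and-mollify scheme, the only twist being that the cut-off near $\partial\Omega$ must not destroy the bound in terms of $\|\nabla f\|_\infty$ alone. The whole construction hinges on replacing the naive estimate $|f|\le\|f\|_\infty$ near the boundary by a linear-in-distance estimate. Throughout I write $L=\|\nabla f\|_\infty$ and $d(x)=\mathrm{dist}(x,\partial\Omega)$.

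First I would establish the key geometric estimate $|f(x)|\le C_\Omega L\,d(x)$ for all $x\in\Omega$. Since $\Omega$ is a bounded Lipschitz domain, it is quasiconvex: there is $C_\Omega$ such that any two points of $\Omega$ are joined by a rectifiable path in $\Omega$ of length at most $C_\Omega$ times their Euclidean distance. Integrating $\nabla f$ along such paths shows that $f$ has a representative that is Lipschitz on $\overline\Omega$ with constant $C_\Omega L$; in particular $f$ extends continuously to $\overline\Omega$, so the condition $f|_{\partial\Omega}=0$ is meaningful. Given $x\in\Omega$, choosing a nearest boundary point $x^*$ with $|x-x^*|=d(x)$ and using $f(x^*)=0$ then yields $|f(x)|=|f(x)-f(x^*)|\le C_\Omega L\,d(x)$. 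This is exactly the place where vanishing on $\partial\Omega$ together with the Lipschitz geometry is used, and I expect it to be the main obstacle: the bound on the right-hand side of (2.8) involves only $\|\nabla f\|_\infty$, which forces all estimates to be traced back to $L$ alone.

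Next I would cut off near the boundary. Fix $\eta\in C^\infty[0,\infty)$ with $\eta\equiv0$ on $[0,1]$, $\eta\equiv1$ on $[2,\infty)$, and $0\le\eta\le1$, and set $\eta_\varepsilon(x)=\eta(d(x)/\varepsilon)$, $g_\varepsilon=\eta_\varepsilon f$. Then $g_\varepsilon$ is Lipschitz with support in $\{d\ge\varepsilon\}\Subset\Omega$. Writing $\nabla g_\varepsilon=\eta_\varepsilon\nabla f+f\nabla\eta_\varepsilon$, the first term is bounded by $L$; and since $\nabla\eta_\varepsilon$ is supported in $\{\varepsilon\le d\le2\varepsilon\}$ with $|\nabla\eta_\varepsilon|\le\|\eta'\|_\infty/\varepsilon$ (using that $d$ is $1$-Lipschitz), the linear-in-distance bound of the previous step gives $|f\nabla\eta_\varepsilon|\le 2C_\Omega\|\eta'\|_\infty L$. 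Hence $\|\nabla g_\varepsilon\|_\infty\le C L$ uniformly in $\varepsilon$, while $\|g_\varepsilon-f\|_\infty=\|(1-\eta_\varepsilon)f\|_\infty\le\sup_{\{d\le2\varepsilon\}}|f|\le 2C_\Omega L\,\varepsilon\to0$. (The non-smoothness of $d$ is irrelevant here because $g_\varepsilon$ is only required to be Lipschitz at this stage.)

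Finally I would mollify and diagonalize. As $g_\varepsilon$ is Lipschitz with compact support in $\Omega$, for $\delta<\mathrm{dist}(\mathrm{supp}\,g_\varepsilon,\partial\Omega)$ the mollification $g_\varepsilon*\rho_\delta$ lies in $C_c^\infty(\Omega)$; moreover $\nabla(g_\varepsilon*\rho_\delta)=(\nabla g_\varepsilon)*\rho_\delta$ gives $\|\nabla(g_\varepsilon*\rho_\delta)\|_\infty\le\|\nabla g_\varepsilon\|_\infty\le CL$, and $g_\varepsilon*\rho_\delta\to g_\varepsilon$ uniformly as $\delta\to0$ by uniform continuity. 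Choosing $\varepsilon_m\to0$ and then $\delta_m\to0$ small enough that $\|g_{\varepsilon_m}*\rho_{\delta_m}-g_{\varepsilon_m}\|_\infty\le\varepsilon_m$, the functions $f_m=g_{\varepsilon_m}*\rho_{\delta_m}\in C_c^\infty(\Omega)$ satisfy $\|\nabla f_m\|_\infty\le CL=C_3\|\nabla f\|_\infty$ and $\|f_m-f\|_\infty\le\|f_m-g_{\varepsilon_m}\|_\infty+\|g_{\varepsilon_m}-f\|_\infty\to0$, which is precisely (2.8) with $C_3=C$.
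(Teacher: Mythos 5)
Your proof is correct, but it takes a genuinely different route from the paper's. The paper uses no boundary cutoff at all: it covers $\overline{\Omega}$ by finitely many Lipschitz, star-shaped pieces $\Omega_j$ (via the covering lemma in Galdi's book), splits $f=\sum_j f\xi_j$ with a partition of unity, pushes the support of each piece inward by the dilation $f_\lambda(x)=f(x/\lambda)$ --- which costs only a factor $1/\lambda$ on the gradient because $f$ vanishes on $\partial\Omega$ --- and then absorbs the cross terms $f\nabla\xi_j$ by the sup-norm Poincar\'e inequality $\|f\|_\infty\leq C_p\|\nabla f\|_\infty$. You instead cut off at distance $\varepsilon$ from $\partial\Omega$, which forces the sharper pointwise bound $|f(x)|\leq C_\Omega\|\nabla f\|_\infty\, d(x)$ to tame the $1/\varepsilon$ in $\nabla\eta_\varepsilon$; this is where the Lipschitz hypothesis enters for you (quasiconvexity of bounded Lipschitz domains, so that the intrinsic and Euclidean metrics are comparable and $f$ has a Lipschitz representative on $\overline{\Omega}$ with constant $C_\Omega\|\nabla f\|_\infty$), whereas for the paper it enters only through the existence of the star-shaped cover. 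Your version avoids the covering lemma and the dilation entirely, at the price of invoking quasiconvexity, which you should state as a lemma and prove or cite explicitly since it is the load-bearing geometric input; it also works verbatim in any bounded quasiconvex domain. The paper's dilation trick is cruder at the corresponding step (it uses the diameter-scale Poincar\'e bound rather than your distance-scale bound) but is the more standard device and transfers directly to the $L^p$ setting. Both arguments produce a constant $C_3$ depending only on the Lipschitz character of $\Omega$, and the remaining mollification and diagonalization steps are identical.
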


\begin{proof}
We begin with the case when $\Omega$ is star-shaped, i.e., $\lambda\Omega_{x_0}\subset \overline{\Omega}$ for some $x_0\in \Omega$ and all $\lambda<1$, where $\lambda\Omega_{x_0}=\{x_0+\lambda(x-x_0)\ |\ x\in \Omega  \}$. We may assume $x_0=0\in \Omega$ and $\lambda\Omega\subset \overline{\Omega}$ by translation. 

For $f\in W^{1,\infty}_{0}(\Omega)$, we set 
\begin{equation*}
f_{\lambda}(x)=
\begin{cases}
&f\big(x/\lambda\big)\quad x\in \lambda\Omega,\\
&0\qquad\hspace{15pt} x\in \Omega\backslash \overline{\lambda\Omega} .
\end{cases}
\end{equation*}
Then, $f_{\lambda}$ is in $ W^{1,\infty}(\Omega)$ since $f$ is vanishing on $\partial\Omega$. It follows that  
\begin{equation*}
\big\|\nabla f_{\lambda}\big\|_{\infty}
\leq \frac{1}{\lambda}\big\|\nabla f\big\|_{\infty},
\end{equation*}
and $f_{\lambda}\to f$ uniformly in $\overline{\Omega}$ as $\lambda\to 1$. By a mollification of $f_{\lambda}$, we obtain a  sequence $\{f_m\}\subset C_{c}^{\infty}(\Omega)$ satisfying (2.8) with $C_3=2$. 

For general $\Omega$, we take an open covering $\{D_j\}_{j=1}^{N}$ so that $\overline{\Omega}\subset \cup_{j=1}^{N}D_j$ and $\Omega_{j}=\Omega\cap D_{j}$ is Lipschitz and star-shaped for some $x_j\in \Omega_{j}$ \cite[Lemma II 1.3]{Gal}. We take a partition of unity $\{\xi_{j}\}_{j=1}^{N}\subset C_{c}^{\infty}(\R)$ such that $\sum_{j=1}^{N}\xi_{j}=1$, $0\leq \xi_{j}\leq 1$, spt $\xi_{j}\subset \overline{D_j}$ and set 
\begin{equation*}
f=\sum_{j=1}^{N}f_j,\quad f_j=f\xi_j.
\end{equation*}
Since spt $f_j\subset \overline{\Omega}_{j}$, $\xi_{j}=0$ on $\partial D_{j}$ and $f=0$ on $\partial\Omega$, $f_{j}$ is in $W^{1,\infty}_{0}(\Omega_{j})$. Since $\Omega_j$ is star-shaped for some $x_j\in \Omega_{j}$, there exists $\{f_{j,m}\}\subset C_{c}^{\infty}(\Omega_j)$ satisfying (2.8) in $\Omega_j$ with $C_3=2$. We extend $f_{j,m}\in C_{c}^{\infty}(\Omega_j)$ to $\Omega\backslash \overline{\Omega_j}$ by the zero extension (still denoted by $f_{j,m}$) and set $f_m=\sum_{j=1}^{N}f_{j,m}$. Then, $f_m\in C_{c}^{\infty}(\Omega)$ converges to $f$ uniformly in $\overline{\Omega}$. We estimate 
\begin{equation*}
\big\|\nabla f_m\big\|_{L^{\infty}(\Omega)}
\leq \sum_{j=1}^{N}\big\|\nabla f_{j,m}\big\|_{L^{\infty}(\Omega_{j})}
\leq 2\sum_{j=1}^{N}\big\|\nabla f_{j}\big\|_{L^{\infty}(\Omega_{j})}.
\end{equation*}
Since $\nabla f_j=\nabla f\xi_j+f\nabla \xi_j$ and  
\begin{equation*}
\big\| f\big\|_{L^{\infty}(\Omega)}
\leq C_{p}\big\|\nabla f\big\|_{L^{\infty}(\Omega)},
\end{equation*}
by the Poincar\'e inequality (e.g., \cite[5.8.1 Theorem 1]{E}), we obtain 
\begin{equation*}
\big\|\nabla f_m\big\|_{L^{\infty}(\Omega)}
\leq C\big\|\nabla f\big\|_{L^{\infty}(\Omega)}.
\end{equation*} 
Thus, $\{f_m\}\subset C_{c}^{\infty}(\Omega)$ satisfies (2.8). The proof is complete.
\end{proof}

\begin{proof}[Proof of Lemma 2.5]
The assertion follows from Propositions 2.6 and 2.7.
\end{proof}

\vspace{5pt}

We recall the a priori estimate of $S(t)\mathbb{P}\partial$ for $f\in C_{c}^{\infty}(\Omega)$ \cite[Theorem 1.2]{A3}.

\vspace{5pt}
\begin{prop}
There exists a constant $C$ such that 
\begin{equation*}
\sup_{0\leq t\leq T}t^{\gamma+\frac{|k|}{2}+s}\Big\|\partial_{t}^{s}\partial_{x}^{k}S(t)\p \partial f\Big\|_{\infty}\leq C\big\|f\big\|_{\infty}^{1-\alpha}\big\| \nabla f\big\|_{\infty}^{\alpha}  \tag{2.9}
\end{equation*}
for $f\in C^{\infty}_{c}(\Omega)$, $0\leq 2s+|k|\leq 2$ and $\alpha\in (0,1)$ with $\gamma=(1-\alpha)/2$.
\end{prop}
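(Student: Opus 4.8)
This is the a priori estimate recorded in \cite[Theorem 1.2]{A3}; the plan is to recall how I would organize its proof and which ingredients of the present section it rests on. The first step is to reduce the whole family of derivative bounds to the case $s=k=0$. For this I would factor $\partial_{t}^{s}\partial_{x}^{k}S(t)\p\partial f=\partial_{t}^{s}\partial_{x}^{k}S(t/2)\big(S(t/2)\p\partial f\big)$ and note that, granting the case $s=k=0$, the field $w:=S(t/2)\p\partial f$ lies in $L^{\infty}_{\sigma}$ with $\|w\|_{\infty}\leq Ct^{-\gamma}\|f\|_{\infty}^{1-\alpha}\|\nabla f\|_{\infty}^{\alpha}$. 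The analyticity and gradient estimates of the Stokes semigroup on $L^{\infty}_{\sigma}$ from \cite{AG2} then give $\|\partial_{t}^{s}\partial_{x}^{k}S(t/2)w\|_{\infty}\leq Ct^{-s-|k|/2}\|w\|_{\infty}$ for $0\leq 2s+|k|\leq 2$, which upgrades the weight from $t^{\gamma}$ to $t^{\gamma+|k|/2+s}$ and yields the full statement.

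It then remains to prove, for $f\in C_{c}^{\infty}(\Omega)$ and $\alpha\in(0,1)$,
\begin{equation*}
\sup_{0<t\leq T}t^{(1-\alpha)/2}\big\|S(t)\p\partial f\big\|_{\infty}\leq C\|f\|_{\infty}^{1-\alpha}\|\nabla f\|_{\infty}^{\alpha}.
\end{equation*}
Heuristically the right-hand side interpolates between the basic composition bound $\|S(t)\p\partial f\|_{\infty}\leq Ct^{-1/2}\|f\|_{\infty}$ (the endpoint $\alpha=0$, coming from the $L^{\infty}$ analysis of \cite{AG2}) and a gradient bound controlled by $\|\nabla f\|_{\infty}$ with no time weight. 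The latter endpoint, however, genuinely fails as $t\to 0$, since $\p\partial f$ need not be bounded by $\|\nabla f\|_{\infty}$ uniformly in $f$ (a dilated bump of fixed Lipschitz constant produces a logarithmic loss through the projection); this is exactly why the range is restricted to $\alpha\in(0,1)$ and why a plain interpolation is unavailable. Accordingly I would prove the displayed estimate directly by the blow-up scheme used for the Stokes semigroup in \cite{AG2}. Assuming it fails, one obtains $f_{m}\in C_{c}^{\infty}(\Omega)$, normalized by $\|f_{m}\|_{\infty}^{1-\alpha}\|\nabla f_{m}\|_{\infty}^{\alpha}=1$, together with $t_{m}\in(0,T]$ and $x_{m}\in\overline{\Omega}$ at which $t_{m}^{(1-\alpha)/2}|S(t_{m})\p\partial f_{m}(x_{m})|\to\infty$. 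Rescaling parabolically at scale $\sqrt{t_{m}}$ about $(x_{m},t_{m})$ produces solutions of the Stokes equations on dilated domains which, according as $d(x_{m})/\sqrt{t_{m}}$ remains bounded or tends to infinity, converge, along a subsequence, to a solution on the half space $\mathbb{R}^{n}_{+}$ or on $\mathbb{R}^{n}$. The interior and boundary Hölder estimates of Proposition 2.3 supply the compactness for this passage; the model estimates on $\mathbb{R}^{n}$ and $\mathbb{R}^{n}_{+}$ from the explicit Stokes kernels (\cite{GIM}, \cite{Sl03}) provide the scale-invariant bounds the limit must satisfy; and the uniqueness property of Proposition 2.4 together with its half-space counterpart \cite[Theorem 5.1]{A3} forces the limit to vanish, contradicting the normalization.

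The essential obstacle throughout is that the Helmholtz projection $\p$ is unbounded on $L^{\infty}$, so one cannot estimate $S(t)\p\partial$ by composing separate bounds for $\p\partial$ and $S(t)$; the operator must be controlled as a single object, the smoothing of the semigroup compensating for the loss in $\p$. In the blow-up this manifests in two ways that I expect to be the hard part. First, the rescaled data are of the form $\p\partial$ applied to the rescaled $f_{m}$ and only have meaning through the Stokes system, via the weak formulation (2.3); one must therefore track three competing length scales---the parabolic scale $\sqrt{t_{m}}$, the distance $d(x_{m})$ to the boundary, and the variation scale $\|f_{m}\|_{\infty}/\|\nabla f_{m}\|_{\infty}$ of the data---and show they organize into the product $\|f_{m}\|_{\infty}^{1-\alpha}\|\nabla f_{m}\|_{\infty}^{\alpha}$ with the correct exponent $\gamma=(1-\alpha)/2$. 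Second, the pressure must be controlled at the same time, which is why the norm $N(v,q)$ and the weighted term $t^{\gamma+1/2}\|d\nabla q\|_{\infty}$ enter the definition (2.2) of a solution; establishing the Liouville property on $\mathbb{R}^{n}_{+}$ for solutions that may be singular at $t=0$ is the technical heart of the argument and is precisely the reason the strengthened uniqueness statement of Proposition 2.4 is needed.
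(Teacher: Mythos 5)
The paper does not prove Proposition 2.8 at all: it is recalled verbatim from \cite[Theorem 1.2]{A3}, so there is no in-paper argument to compare yours against. Judged on its own terms, your proposal is an outline rather than a proof. The reduction of the derivative bounds to the case $s=k=0$ via the factorization $\partial_t^s\partial_x^k S(t)\p\partial f=\partial_t^s\partial_x^k S(t/2)\bigl(S(t/2)\p\partial f\bigr)$ and the analyticity estimates of \cite{AG2} is sound and standard. For the base case, your blow-up scheme (normalize $\|f_m\|_\infty^{1-\alpha}\|\nabla f_m\|_\infty^{\alpha}=1$, rescale parabolically at scale $\sqrt{t_m}$, dichotomize on $d(x_m)/\sqrt{t_m}$, extract compactness from Proposition 2.3, and kill the limit on $\mathbb{R}^n$ or $\mathbb{R}^n_+$ by the uniqueness results behind Proposition 2.4 and \cite[Theorem 5.1]{A3}) is the right architecture and is consistent with how the cited source proceeds. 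But the steps you yourself flag as the technical heart --- giving meaning to the rescaled data $\p\partial f_m$ only through the weak formulation (2.3), the bookkeeping of the three length scales that produces the exponent $\gamma=(1-\alpha)/2$, and the half-space Liouville theorem for solutions that are singular at $t=0$ --- are named rather than carried out, and they are precisely the content of the theorem being cited. So the proposal is a faithful road map, not a self-contained proof.

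One concrete inaccuracy in your heuristic framing: you present (2.9) as interpolating between an $\alpha=0$ endpoint $\|S(t)\p\partial f\|_\infty\le Ct^{-1/2}\|f\|_\infty$ ``coming from \cite{AG2}'' and a failing $\alpha=1$ endpoint. The $\alpha=1$ failure is correctly diagnosed (unboundedness of $\p$ on $L^\infty$ as $t\to 0$), but the $\alpha=0$ endpoint is not available for exterior domains either; if it were, estimate (1.2) would be stated with weight $t^{-1/2}$ instead of $t^{-(1-\alpha)/2}$ and the interpolated formulation with $\alpha\in(0,1)$ would be pointless. Since your actual argument proves the interpolated bound directly by blow-up rather than by interpolating endpoints, nothing downstream breaks, but the attribution to \cite{AG2} should be dropped.
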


\vspace{5pt}

\begin{proof}[Proof of Theorem 2.1]
For $f\in W^{1,\infty}_{0}$, we take a sequence $\{f_m\}\subset C^{\infty}_{c}$ satisfying (2.7). For $v_{0,m}=\p\partial f_m$, there exists a solution of the Stokes equations $(v_m,q_m)$ satisfying  
\begin{equation*}
\int_{0}^{T}\int_{\Omega}\big( v_{m}\cdot (\partial_{t}\varphi+\Delta \varphi)-\nabla q_m\cdot \varphi \big)\dd x\dd t
=\int_{\Omega}f_m\cdot \partial \p\varphi_{0}\dd x,
\end{equation*}
for $\varphi\in C^{\infty}_{c}\big(\Omega\times [0,T)\big)$. By (2.9) and (2.7), there exists a constant $C$ independent of $m\geq 1$ such that  
\begin{equation*}
\sup_{0\leq t\leq T}\Big\{ t^{\gamma}\big\|N(v_m,q_m)\big\|_{\infty}(t)+t^{\gamma+\frac{1}{2}}\big\|d\nabla q_{m}\big\|_{\infty}(t)\Big\}
\leq C\big\|f\big\|_{\infty}^{1-\alpha}\big\| f\big\|_{1,\infty}^{\alpha}.
\end{equation*}
We apply Proposition 2.3 and observe that there exists a subsequence of $(v_m,q_m)$ such that $(v_m,q_m)$ converges to a limit $(v,q)$ locally uniformly in $\overline{\Omega}\times (0,T]$ together with $\nabla v_m$, $\nabla^{2}v_m$, $\partial_{t}v_m$ and $\nabla q_{m}$. By sending $m\to\infty$, we obtain a solution $(v,q)$ of (2.1) for $v_0=\p\partial f$. By Proposition 2.4, the limit $(v,q)$ is unique. We proved the unique existence of solutions of (2.1) for $v_0=\p\partial f$ and $f\in W^{1,\infty}_{0}$ satisfying (2.4). The proof is now complete.
\end{proof}

\vspace{5pt}

\begin{rems}

\noindent 
(i) By the approximation (2.7) we are able to extend the operator $\mathbb{P}\partial$ for $f\in W^{1,\infty}_{0}$. We take a sequence $\{f_m\}\subset C_{c}^{\infty}$ satisfying (2.7) by Lemma 2.5 and observe that $v_{0,m}=\mathbb{P}\partial f_m$ satisfies 
\begin{align*}
(v_{0,m}, \varphi)=-(f_m, \partial \mathbb{P}\varphi)\quad \textrm{for}\ \varphi\in C^{\infty}_{c}(\Omega).
\end{align*}
Since $\partial \mathbb{P}\varphi\in L^{1}(\Omega)$ by Lemma A.1, the sequence $\{v_{0,m}\}$ converges to a limit $v_0$ in the distributional sense and the limit $v_0$ satisfies $(v_0,\varphi)=-(f,\partial \mathbb{P}\varphi)$. Since the limit $v_0$ is unique, the operator $\mathbb{P}\partial$ is uniquely extendable for $f\in W^{1,\infty}_{0}$.

\noindent 
(ii) We recall that for a sequence $\{v_{0,m}\}_{m=1}^{\infty}\subset L^{\infty}_{\sigma}$ satisfying 
\begin{align*}
||v_{0,m}||_{\infty}&\leq K_1,\\
v_{0,m}\to v_{0}&\quad \textrm{a.e.}\ \Omega, 
\end{align*}
with some constant $K_1$, there exists a subsequence such that $S(t)v_{0,m}$ converges to $S(t)v_0$ locally uniformly in $\overline{\Omega}\times (0,\infty)$ \cite{AG2}. From the proof of Theorem 2.1, we observe that for a sequence $\{f_m\}\subset W^{1,\infty}_{0}$ satisfying 
\begin{align*}
||f_m||_{1,\infty}&\leq K_2,\\
f_m\to f\quad &\textrm{locally uniformly in}\ \overline{\Omega},
\end{align*}
$\overline{S(t)\p\partial} f_m$ subsequently converges to $\overline{S(t)\p\partial} f$ locally uniformly in $\overline{\Omega}\times (0,\infty)$.

\noindent 
(iii) The extension $\overline{S(t)\p\partial}$ satisfies the property 
\begin{align*}
S(t)\overline{S(s)\p\partial} f=\overline{S(t+s)\p\partial} f
\end{align*}
for $t\geq 0$, $s>0$ and $f\in W^{1,\infty}_{0}$. In fact, this property holds for $f_m\in C_{c}^{\infty}$ satisfying (2.7). By choosing a subsequence, $v_m(\cdot,t)=S(t)\p\partial f_m$ converges to $v(\cdot,t)=S(t)\p\partial f$ locally uniformly in $\overline{\Omega}\times (0,\infty)$ as in the proof of Theorem 2.1. For fixed $s>0$, sending $m\to\infty$ implies 
\begin{align*}
S(t)S(s)\p\partial f_m=S(t)v_m(s) 
&\to S(t)v(s) \\
&=S(t)\overline{S(s)\p\partial}f\quad \textrm{locally uniformly in $\overline{\Omega}\times (0,\infty)$}.
\end{align*}     
Thus the property is inherited to $\overline{S(t)\p\partial} f$.
\end{rems}

\vspace{10pt}

\section{Mild solutions on $L^{\infty}_{\sigma}$}

\vspace{5pt}

We prove Theorem 1.1 by approximation. We show that a sequence of mild solutions $\{u_m\}$ subsequently converges to a limit $u$ locally uniformly in $\overline{\Omega}\times (0,T]$ by the $L^{\infty}$-estimates (1.5) and (1.6). Then, by an approximation argument for linear operators, we show that the limit $u$ satisfies the integral equation (1.3). We first recall the existence of mild solutions on $C_{0,\sigma}$ \cite[Theorem 1.1]{A4}

\vspace{5pt}

\begin{prop}
For $u_0\in C_{0,\sigma}$, there exist $T\geq \varepsilon_0/||u_0||_{\infty}^{2}$ and a unique mild solution $u\in C([0,T]; C_{0,\sigma})$ satisfying (1.3)-(1.6).
\end{prop}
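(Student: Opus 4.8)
The plan is to solve the integral equation (1.3) by the contraction mapping principle, treating $S(t)$ as a $C_0$-semigroup on $C_{0,\sigma}$ and $\overline{S(t)\p\D}$ as the bounded extension furnished by Theorem 2.2. Write $M=\|u_0\|_\infty$; it suffices to take $T\leq 1$. Work in the complete metric space
\[
X_T=\Big\{u\in C([0,T];C_{0,\sigma})\ \Big|\ \|u\|_X:=\sup_{0<t\leq T}\big(\|u\|_\infty(t)+t^{\frac12}\|\nabla u\|_\infty(t)\big)<\infty\Big\}
\]
with the Picard map
\[
\Phi(u)(t)=S(t)u_0-\int_0^t\overline{S(t-s)\p\D}\,(uu)(s)\,\dd s,
\]
and seek a fixed point of $\Phi$ in a small ball of $X_T$.

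For the linear term, the bounded analyticity of $S(t)$ on $L^\infty_\sigma$ (from \cite{AG2}) yields $\|S(t)u_0\|_\infty\leq C_0M$ and $t^{1/2}\|\nabla S(t)u_0\|_\infty\leq C_0M$, hence $\|S(\cdot)u_0\|_X\leq C_0M$. For the nonlinear term the crucial structural fact is that $u|_{\partial\Omega}=0$ forces $uu\in W^{1,\infty}_0$, with $\|uu\|_\infty\leq\|u\|_\infty^2$ and $\|\nabla(uu)\|_\infty\leq 2\|u\|_\infty\|\nabla u\|_\infty$; so for $\|u\|_X\leq K$ one has $\|uu\|_\infty(s)\leq K^2$ and $\|uu\|_{1,\infty}(s)\leq CK^2s^{-1/2}$ on $0<s\leq T\leq 1$. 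Inserting these into (2.5) with $|k|=0,1$ and a fixed $\alpha\in(0,1)$, the Duhamel integrands become of Beta type $(t-s)^{-(1-\alpha)/2-|k|/2}s^{-\alpha/2}$, integrable for $\alpha\in(0,1)$, with time integrals of order $t^{(1-|k|)/2}$; weighting by $t^{|k|/2}$ as in $\|\cdot\|_X$ gives a common factor $t^{1/2}$, so that
\[
\|\Phi(u)-S(\cdot)u_0\|_X\leq C_1K^2T^{1/2},\qquad \|\Phi(u)-\Phi(v)\|_X\leq C_1K\,T^{1/2}\|u-v\|_X,
\]
the latter via $uu-vv=u(u-v)+(u-v)v$. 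Taking $K=2C_0M$ and $T$ so small that $C_1KT^{1/2}\leq 1/2$, i.e. $T\leq\varepsilon_0/M^2$ with $\varepsilon_0=\varepsilon_0(\Omega)$, makes $\Phi$ a contraction self-map of $\{\|u\|_X\leq K\}$; Banach's theorem gives a unique fixed point $u\in X_T$ solving (1.3), which provides both the existence time $T\geq\varepsilon_0/\|u_0\|_\infty^2$ and the first two terms of (1.5). Strong continuity $u\in C([0,T];C_{0,\sigma})$ follows since $S(t)$ is a $C_0$-semigroup there and the Duhamel term lies in $C_{0,\sigma}$ and tends to $0$ in $L^\infty$ as $t\to0$.

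The remaining assertions are derived a posteriori. The Hölder term of (1.5) and the bounds (1.6) cannot be read off from (2.5) directly, since the second-derivative Duhamel integral diverges for every $\alpha\in(0,1)$; instead, once $\|u\|_\infty$ and $t^{1/2}\|\nabla u\|_\infty$ are controlled, one regards $u$ as a solution of the Stokes system with forcing $\p\D(uu)$ and applies the interior and boundary Schauder estimates of Proposition 2.3 on parabolic cylinders of size $\sqrt t$; parabolic scaling then converts the uniform bounds into $t^{(1+\beta)/2}[\nabla u]^{(\beta)}_\Omega\leq C\|u_0\|_\infty$ and into (1.6) on $[\delta,T]$. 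Finally, the weak form (1.4) is obtained by pairing (1.3) with $\varphi\in C^\infty_{c,\sigma}$ and assembling the Stokes weak identity (2.3) over the Duhamel slices, the pressure dropping out because $\int_\Omega\nabla q\cdot\varphi\,\dd x=0$ for divergence-free compactly supported $\varphi$.

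The step I expect to be the main obstacle is choosing the iteration norm so that it is simultaneously strong enough for the nonlinearity $uu$ to lie in $W^{1,\infty}_0$ — so that the bounded extension $\overline{S(t)\p\D}$ of Theorem 2.2, rather than the unbounded $S(t)\p\D$, may be applied — and weak enough for the Duhamel integrals to converge; this is exactly what dictates the two time-weighted quantities above and the interpolation exponent $\alpha$ in (2.5). A secondary delicate point is promoting weak-star to strong continuity in $C_{0,\sigma}$ up to $t=0$: this is precisely where the hypothesis $u_0\in C_{0,\sigma}$ (rather than $u_0\in L^\infty_\sigma$) is used, and it is the reason this proposition, not a direct argument on $L^\infty_\sigma$, serves as the building block for Theorem 1.1.
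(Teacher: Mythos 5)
The paper offers no proof of this proposition: it is recalled as a black box from the author's earlier work \cite{A4} (cited there as Theorem 1.1), so your argument is a reconstruction rather than a parallel of anything in this text. That said, your contraction scheme is the standard and, judging from the introduction, the intended route: Picard iteration in the time-weighted norm $\sup_t(\|u\|_{\infty}+t^{1/2}\|\nabla u\|_{\infty})$, with the nonlinearity fed through the bounded extension of $S(t)\p\D$ via the interpolation estimate (1.2)/(2.5), and your Beta-function bookkeeping (exponents $(1-\alpha)/2+|k|/2$ against $s^{-\alpha/2}$, net gain $t^{(1-|k|)/2}$) is correct. Three points deserve tightening. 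First, for $u_0\in C_{0,\sigma}$ the relevant extension is the one from $C^{1}_{0}$ to $C_{0,\sigma}$ already available in \cite{A4}; invoking Theorem 2.2 ($W^{1,\infty}_{0}\to L^{\infty}_{\sigma}$) is legitimate but then you must still check that the Duhamel term lands back in $C_{0,\sigma}$, not merely $L^{\infty}_{\sigma}$, so that $\Phi$ maps $X_T$ into itself; this follows because $u(t)\in C_{0,\sigma}$ with $\nabla u(t)\in L^{\infty}$ forces $uu(t)\in C^{1}_{0}$, but it should be said. Second, Banach's theorem gives uniqueness only in the ball $\{\|u\|_X\leq K\}$, whereas the proposition asserts uniqueness in $C([0,T];C_{0,\sigma})$; the usual supplementary step (a Gronwall or continuity-of-the-existence-interval argument, after noting that any mild solution in this class necessarily carries the gradient bound needed to make (1.3) meaningful) is missing. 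Third, the normalization $T\leq 1$ is needed anyway because the constant in (2.5) depends on the time horizon, but it sits awkwardly with the advertised lower bound $T\geq\varepsilon_0/\|u_0\|_{\infty}^{2}$ when $\|u_0\|_{\infty}$ is small; this is really an issue with the statement as quoted, not with your argument. None of these is a structural flaw, and your closing remarks correctly identify why the $C_{0,\sigma}$ hypothesis (strong continuity at $t=0$) is what makes this proposition, rather than a direct iteration on $L^{\infty}_{\sigma}$, the building block for Theorem 1.1.
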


\vspace{5pt}
We approximate $u_0\in L^{\infty}_{\sigma}$ by elements of $C_{c,\sigma}^{\infty}\subset C_{0,\sigma}$. We take a sequence $\{u_{0,m}\}_{m=1}^{\infty}\subset C_{c,\sigma}^{\infty}(\Omega)$ satisfying 
\begin{equation*}
\begin{aligned}
&\|u_{0,m}\|_{\infty}\leq C\|u_{0}\|_{\infty}\\
&u_{0,m}\to u_{0}\quad \textrm{a.e. in}\ \Omega,
\end{aligned}
\tag{3.1}
\end{equation*}
with some constant $C$, independent of $m\geq 1$ \cite[Lemma 5.1]{AG2}. We apply Proposition 3.1 and observe that there exists $T_m\geq \varepsilon_0/||u_{0,m}||_{\infty}^{2}$ and a unique mild solution $u_m\in C([0,T_m]; C_{0,\sigma})$ satisfying 
\begin{equation*}
\begin{aligned}
u_m(t)&=S(t)u_{0,_m}-\int_{0}^{t}\overline{S(t-s)\p \D}F_m(s)ds,\\
F_m&=u_mu_m.
\end{aligned}
 \tag{3.2}
\end{equation*}
Since $T_m$ is estimated from below by (3.1), we take $T\geq \varepsilon/||u_0||_{\infty}^{2}$ for $\varepsilon=\varepsilon_0 C^{-2}/2$ so that $T_m\geq T$ and $u_m\in C([0,T]; C_{0,\sigma})$ for $m\geq 1$.

\vspace{5pt}

\begin{prop}
There exists a subsequence such that $u_m$ converges to a limit $u$ locally uniformly in $\overline{\Omega}\times (0,T]$ together with $\nabla u_m$.
\end{prop}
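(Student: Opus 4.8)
The plan is to apply the Arzel\`a--Ascoli theorem to both sequences $\{u_m\}$ and $\{\nabla u_m\}$ on compact subsets of $\overline{\Omega}\times(0,T]$, exploiting that the estimates (1.5) and (1.6) hold uniformly in $m$, and then to upgrade the result to local uniform convergence on the whole (unbounded) domain by a diagonal extraction.

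First I would record the uniform-in-$m$ bounds. By (3.1) the initial data satisfy $\|u_{0,m}\|_{\infty}\leq C\|u_0\|_{\infty}$, and each $u_m$ obeys (1.5)--(1.6) by Proposition 3.1. Since $C_1$ is independent of the data and $T$, and $C_2$ depends only on $\gamma,\delta,T$, the right-hand sides of (1.5)--(1.6) are bounded by a constant multiple of $\|u_0\|_{\infty}$ independent of $m$. Fixing $\delta\in(0,T)$ and $R>0$, set $K_{\delta,R}=(\overline{\Omega}\cap\overline{B_R})\times[\delta,T]$. On $K_{\delta,R}$ the weights $t^{-1/2}$ and $t^{-(1+\beta)/2}$ are bounded, so (1.5) yields uniform bounds on $\|u_m\|_{\infty}$, on $\|\nabla u_m\|_{\infty}$, and on the spatial H\"older seminorm $[\nabla u_m]^{(\beta)}_{\Omega}$, while (1.6) yields uniform bounds on the temporal H\"older seminorms of $u_m$ and of $\nabla u_m$ over $[\delta,T]$.

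Next I would combine these into joint equicontinuity. The uniform gradient bound makes $\{u_m\}$ uniformly Lipschitz in space, and the temporal bound from (1.6) makes it uniformly H\"older in time; hence $\{u_m\}$ is uniformly bounded and equicontinuous on $K_{\delta,R}$. Likewise, the spatial H\"older bound on $\nabla u_m$ together with its temporal H\"older bound makes $\{\nabla u_m\}$ uniformly bounded and equicontinuous on $K_{\delta,R}$. Arzel\`a--Ascoli then provides a subsequence along which $u_m$ and $\nabla u_m$ converge uniformly on $K_{\delta,R}$. Exhausting $\overline{\Omega}\times(0,T]$ by the compacta $K_{\delta_k,R_k}$ with $\delta_k\downarrow 0$ and $R_k\uparrow\infty$, and passing to a diagonal subsequence, I obtain $u_m\to u$ and $\nabla u_m\to w$ locally uniformly in $\overline{\Omega}\times(0,T]$. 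Finally, uniform convergence of the $u_m$ together with uniform convergence of their gradients forces $w=\nabla u$ in the classical sense, which gives the stated convergence of $u_m$ together with $\nabla u_m$.

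The argument is essentially a routine application of Arzel\`a--Ascoli, so no single step is a genuine obstacle; the points requiring care are verifying that the constants in (1.5)--(1.6) are truly independent of $m$ (which rests on $\|u_{0,m}\|_{\infty}\leq C\|u_0\|_{\infty}$ and on $C_1,C_2$ not depending on $m$), and carrying out the diagonal exhaustion over the unbounded set $\overline{\Omega}$ simultaneously with the shrinking time interval. The latter is unavoidable: the weighted estimates degenerate as $t\to 0$, so convergence of the gradients cannot be expected up to the initial time, which is why the conclusion is stated only on $(0,T]$.
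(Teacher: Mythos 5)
Your proposal is correct and follows essentially the same route as the paper: the uniform-in-$m$ bounds obtained from (1.5), (1.6) and (3.1) give uniform boundedness and equicontinuity of $u_m$ and $\nabla u_m$ away from $t=0$, and the Ascoli--Arzel\`a theorem yields the locally uniform limit. The only difference is that you spell out the diagonal extraction over compacta $K_{\delta_k,R_k}$ and the identification $w=\nabla u$, which the paper leaves implicit.
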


\vspace{5pt}

\begin{proof}
It follows from (1.5), (1.6) and (3.1) that 
\begin{align*}
\sup_{0\leq t\leq T}\Big\{\|u_{m}\|_{\infty}(t)+t^{\frac{1}{2}}\|\nabla u_{m}\|_{\infty}(t)+t^{\frac{1+\beta}{2}}\big[\nabla u_{m}\big]_{\Omega}^{(\beta)}(t)\Big\}
&\leq C_1'\|u_0\|_{\infty},  \tag{3.3}  \\
\sup_{x\in\Omega}\Big\{\big[u_{m}\big]_{[\delta ,T]}^{(\gamma)}(x)
+\big[\nabla u_{m}\big]_{[\delta ,T]}^{(\frac{\gamma}{2})}(x)\Big\}
&\leq C_2'\|u_0\|_{\infty},  \tag{3.4}
\end{align*}
for $\beta,\gamma\in (0,1)$ and $\delta \in (0,T]$ with some constants $C_1'$ and $C_2'$, independent of $m\geq 1$. Since $u_m$ and $\nabla u_m$ are uniformly bounded and equi-continuous in $\overline{\Omega}\times [\delta,T]$, the assertion follows from the Ascoli-Arzel\`a theorem.
\end{proof}

\vspace{5pt}

\begin{prop}
The limit $u\in C_{w}([0,T]; L^{\infty})$ is a mild solution for $u_0\in L^{\infty}_{\sigma}$.
\end{prop}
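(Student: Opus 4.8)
The plan is to pass to the limit $m\to\infty$ in the integral equation (3.2), along the subsequence supplied by Proposition 3.3, so as to identify $u$ as a solution of (1.3), and then to verify $u\in C_{w}([0,T];L^{\infty})$. Since $u_m\to u$ and $\nabla u_m\to\nabla u$ locally uniformly on $\overline{\Omega}\times(0,T]$, the tensor products converge as well, $F_m=u_mu_m\to uu$ and $\nabla F_m\to\nabla(uu)$ locally uniformly, so the limiting nonlinearity is $F=uu$. It then suffices to show that each term of (3.2) converges to the corresponding term of (1.3).

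For the linear term, (3.1) gives $u_{0,m}\to u_0$ a.e.\ with $\|u_{0,m}\|_{\infty}\le C\|u_0\|_{\infty}$, so the first part of Remarks 2.9 (ii) yields $S(t)u_{0,m}\to S(t)u_0$ locally uniformly on $\overline{\Omega}\times(0,T]$. The decisive step is the nonlinear term. First I would check that $F_m(s)\in W^{1,\infty}_{0}$ for each $s\in(0,T]$, so that $\overline{S(t-s)\p\D}\,F_m(s)$ is defined by Theorem 2.2: indeed $u_m(s)\in C_{0,\sigma}$ vanishes on $\partial\Omega$ and has bounded gradient by (3.3), hence so does $F_m(s)=u_m(s)u_m(s)$. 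Fixing $s$, the sequence $\{F_m(s)\}$ converges to $F(s)$ locally uniformly in $\overline{\Omega}$ and satisfies $\|F_m(s)\|_{1,\infty}\le K_2(s)$ uniformly in $m$ by (3.3); the second part of Remarks 2.9 (ii), together with the uniqueness of the extension in Theorem 2.2 (which upgrades the subsequential convergence there to convergence of the full sequence), then gives $[\overline{S(t-s)\p\D}\,F_m(s)](x)\to[\overline{S(t-s)\p\D}\,F(s)](x)$ for every $x\in\overline{\Omega}$ and $0<s<t$. To exchange limit and time integral I would dominate the integrand by (2.5),
\[
\big\|\overline{S(t-s)\p\D}\,F_m(s)\big\|_{\infty}\le C(t-s)^{-\frac{1-\alpha}{2}}\|F_m(s)\|_{\infty}^{1-\alpha}\|F_m(s)\|_{1,\infty}^{\alpha}\le C\|u_0\|_{\infty}^{2}\,(t-s)^{-\frac{1-\alpha}{2}}s^{-\frac{\alpha}{2}},
\]
where (3.3) yields $\|F_m(s)\|_{\infty}\le C\|u_0\|_{\infty}^{2}$ and $\|F_m(s)\|_{1,\infty}\le C\|u_0\|_{\infty}^{2}s^{-1/2}$. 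For $\alpha\in(0,1)$ both exponents lie below $1$, so the majorant is integrable on $(0,t)$ and the dominated convergence theorem gives convergence of the integral terms. Thus $u$ satisfies (1.3) with $F=uu$.

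For the regularity, local uniform convergence shows that $u$ is jointly continuous on $\overline{\Omega}\times(0,T]$, while (1.5) (equivalently (3.3)) gives $\|u(t)\|_{\infty}\le C\|u_0\|_{\infty}$. For $\phi\in L^{1}(\Omega)$ and $t_0\in(0,T]$, splitting $\Omega$ into a compact set carrying most of the mass of $\phi$ and its complement shows that $t\mapsto\int_{\Omega}u(t)\cdot\phi\,\dd x$ is continuous on $(0,T]$. At $t=0$ I would use (1.3): the nonlinear integral is $O(t^{1/2})$ in $L^{\infty}$ by the majorant above and hence vanishes weakly-star, while $S(t)u_0\to u_0$ weakly-star in $L^{\infty}$ as $t\to0$ by the construction of the Stokes semigroup on $L^{\infty}_{\sigma}$ in \cite{AG2}. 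This gives $u\in C_{w}([0,T];L^{\infty})$.

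The main obstacle is the nonlinear term. Because $\overline{S(t-s)\p\D}$ is defined only as an abstract limit of the operators $S(t-s)\p\D$ and is not given by an explicit kernel, it cannot be handled by naive continuity; the argument must combine the qualitative convergence of Remarks 2.9 (ii) (pointwise in $s$, with uniqueness removing any dependence on subsequences) with the quantitative, $s$-integrable majorant from (2.5) that absorbs the time singularity $s^{-\alpha/2}$ of $\|F_m(s)\|_{1,\infty}$ near $s=0$.
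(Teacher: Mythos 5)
Your proposal is correct and follows essentially the same route as the paper: pass to the limit in (3.2) using Remarks 2.9 (ii) for both $S(t)u_{0,m}$ and $\overline{S(t-s)\p\D}F_m$, dominate the nonlinear integrand via (2.5) with the bounds $\|F_m\|_{\infty}\leq K$, $\|\nabla F_m\|_{\infty}\leq 2Ks^{-1/2}$ from (3.3), apply dominated convergence, and deduce weak-star continuity at $t=0$ from that of $S(t)u_0$. Your remark that uniqueness of the limit upgrades the subsequential convergence of $\overline{S(t-s)\p\D}F_m$ to full-sequence convergence (needed to apply dominated convergence cleanly over $s\in(0,t)$) is a slightly more careful phrasing of what the paper does implicitly.
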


\begin{proof}
We observe that the limit $u$ satisfies (1.4) by sending $m\to\infty$. The estimates (3.3) and (3.4) are inherited to $u$. We prove that $u$ satisfies the integral equation (1.3). By (3.1) and choosing a subsequence, $S(t)u_{0,m}$ converges to $S(t)u_0$ locally uniformly in $\overline{\Omega}\times (0,T]$ by Remarks 2.9 (ii). It follows from (3.3) and Proposition 3.2 that 
\begin{equation*}
\begin{aligned}
||F_m||_{\infty}&\leq K,\\
||\nabla F_{m}||_{\infty}&\leq \frac{2}{s^{\frac{1}{2}}}K,\\
F_m\to F\quad &\textrm{locally uniformly in}\ \overline{\Omega}\times (0,T]\ \textrm{as}\ m\to\infty,
\end{aligned}
\tag{3.5}
\end{equation*}
for $F=uu$ and $K=C_1'||u_0||_{\infty}$. By choosing a subsequence, we have
\begin{align*}
\overline{S(\eta)\p\D} F_m\to \overline{S(\eta)\p\D} F\quad  \textrm{locally uniformly in}\ \overline{\Omega}\times (0,T],
\end{align*} 
for each $s\in(0,t)$ as in Remarks 2.9 (ii). It follows from (3.5) and (2.5) that 
\begin{equation*}
\big\|\overline{S(t-s)\p\D} F_m\big\|_{\infty}\leq \frac{C}{(t-s)^{\frac{1-\alpha}{2}}}\Bigg(1+\frac{2}{s^{\frac{\alpha}{2}}}\Bigg)K^{2}
\end{equation*}
for $0<s< t$ and $\alpha\in (0,1)$. By the dominated convergence theorem, we have  
\begin{align*}
\int_{0}^{t}\overline{S(t-s)\p\D} F_m\dd s\to
\int_{0}^{t}\overline{S(t-s)\p\D} F\dd s\quad \textrm{locally uniformly in}\ \overline{\Omega}\times [0,T].
\end{align*}
Thus sending $m\to\infty$ implies that the limit $u$ is a mild solution for $u_0\in L^{\infty}_{\sigma}$. Since $S(t)u_0$ is weakly-star continuous on $L^{\infty}$ at $t=0$ \cite{AG2}, so is $u$.
\end{proof}

\vspace{5pt}

It remains to show continuity at $t=0$ for $u_0\in BUC_{\sigma}$.

\vspace{5pt}

\begin{prop}
For $u_0\in BUC_{\sigma}$, $S(t)u_0$, $t^{1/2}\nabla S(t)u_0\in C([0,T]; BUC)$ and $t^{1/2}||\nabla S(t)u_0||_{\infty}\to 0$ as $t\to 0$.
\end{prop}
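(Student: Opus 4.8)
The plan is to reduce both assertions to the two limits
$$
t^{\frac12}\big\|\nabla S(t)u_0\big\|_{\infty}\to 0,\qquad \big\|S(t)u_0-u_0\big\|_{\infty}\to 0\quad(t\to 0),
$$
for $u_0\in BUC_{\sigma}$. Indeed, for every fixed $t>0$ the uniform bounds $t^{\frac12}\|\nabla S(t)u_0\|_\infty+t\|\nabla^2 S(t)u_0\|_\infty\le C\|u_0\|_\infty$ (contained in the estimate $\|N(S(t)u_0,q)\|_\infty\le C\|u_0\|_\infty$ of \cite{AG2}) make $S(t)u_0$ and $\nabla S(t)u_0$ Lipschitz, hence elements of $BUC$; continuity in $t$ on $(0,T]$ then follows from analyticity of $S(t)$ on $L^\infty_\sigma$ together with the strong continuity at $t=0$, while continuity at $t=0$ is precisely the two displayed limits (the value of $t^{1/2}\nabla S(t)u_0$ at $t=0$ being $0$). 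Since $C^{\infty}_{c,\sigma}$ is \emph{not} dense in $BUC_{\sigma}$ in the sup-norm — elements of $BUC_{\sigma}$ need not decay — a direct approximation argument is unavailable, and I would instead argue by contradiction using the same blow-up technique as for Proposition 2.4.

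For the gradient decay, suppose it fails: there are $\varepsilon>0$, $t_k\downarrow 0$ and $x_k\in\Omega$ with $t_k^{1/2}|\nabla S(t_k)u_0(x_k)|\ge\varepsilon$. Rescaling parabolically, put
$$
w_k(y,s)=S(t_ks)\,u_0\big(x_k+t_k^{1/2}y\big)\quad\text{on}\ \Omega_k\times(0,1],\qquad \Omega_k=\{y:\,x_k+t_k^{1/2}y\in\Omega\},
$$
so that $w_k$ solves a Stokes system on $\Omega_k$, is bounded by $C\|u_0\|_\infty$, and satisfies $|\nabla_y w_k(0,1)|\ge\varepsilon$. The uniform estimate $\|N(w_k,Q_k)\|_\infty\le C\|u_0\|_\infty$, combined with the scale-invariant local H\"older estimates of Proposition 2.3, gives uniform local H\"older bounds, so along a subsequence $w_k\to w_\infty$ locally uniformly together with $\nabla w_k$, $\nabla^2 w_k$, $\partial_s w_k$ and the rescaled pressure gradient.

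The limiting geometry is decided by $d(x_k)/t_k^{1/2}$: if this ratio tends to $\infty$ then $\Omega_k\to\mathbb{R}^n$ and $w_\infty$ solves the heat equation on $\mathbb{R}^n\times(0,1]$, whereas if it stays bounded then, by the $C^3$-regularity of $\partial\Omega$, $\Omega_k$ converges to a half-space and $w_\infty$ solves the half-space Stokes system with the no-slip condition. In either case I would identify the initial data of $w_\infty$ by passing to the limit in the weak form: uniform continuity of $u_0$ gives $u_0(x_k+t_k^{1/2}y)\to c:=\lim_k u_0(x_k)$ locally uniformly in $y$, and in the half-space case $x_k\to\partial\Omega$ forces $c=0$ because $u_0$ vanishes on $\partial\Omega$. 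Thus $w_\infty$ is the bounded solution with constant (resp. zero) initial data; bounded solutions of the heat equation with constant data are constant, while the half-space solution with zero data vanishes by \cite[Theorem 5.1]{A3}. Either way $\nabla w_\infty\equiv 0$, contradicting $|\nabla_y w_\infty(0,1)|=\lim_k|\nabla_y w_k(0,1)|\ge\varepsilon$. The identical rescaling applied to $S(t_k)u_0(x_k)-u_0(x_k)$ yields the strong continuity $\|S(t)u_0-u_0\|_\infty\to 0$, since then $w_\infty(0,1)=c=\lim_k u_0(x_k)$.

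The main obstacle is the rigorous construction and identification of the blow-up limit: securing uniform local Schauder bounds for the rescaled pressures and passing to the limit in the weak formulation so as to certify that $w_\infty$ satisfies the hypotheses — in particular the bound (2.2) with $\gamma=0$ and the correct (constant or zero) initial data in the distributional sense — required to invoke the whole-space and half-space uniqueness. Controlling the pressure gradient near $s=0$ under the dilation, through the weighted bound $t^{1/2}\|d\,\nabla q\|_\infty\le C\|u_0\|_\infty$, is the delicate point.
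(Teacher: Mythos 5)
Your argument is essentially correct in outline, but it takes a genuinely different and much heavier route than the paper. The paper does \emph{not} resort to a blow-up: it observes that, although $C^{\infty}_{c,\sigma}$ is not sup-norm dense in $BUC_{\sigma}$, one can split $u_0=u_0^{1}+u_0^{2}$ via the Bogovski{\u\i} operator (\cite[Lemma 5.1]{AG2}), with $u_0^{1}\in C_{0,\sigma}$ compactly supported in $\overline{\Omega}$ and $u_0^{2}\in BUC_{\sigma}$ supported away from $\partial\Omega$. The first piece is approximated in sup norm by $C^{\infty}_{c,\sigma}\subset D(A)$, the second by mollification (legitimate since it avoids the boundary, and then $AS(t)u^{2}_{0,m}=S(t)\Delta u^{2}_{0,m}$); combining the resolvent estimate $\|\nabla v\|_{\infty}\leq C(\|v\|_{\infty}+\|Av\|_{\infty})$ for $v\in D(A)$ with the uniform bound $t^{1/2}\|\nabla S(t)\|\leq C$ then gives $\overline{\lim}_{t\to0}t^{1/2}\|\nabla S(t)u_0\|_{\infty}\leq C\epsilon$ for every $\epsilon$. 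So the premise behind your choice of method -- ``a direct approximation argument is unavailable'' -- is what the paper refutes, and this is the whole point of the proof. Your blow-up alternative is plausible and its ingredients (dichotomy on $d(x_k)/t_k^{1/2}$, identification of constant/zero initial data, heat-equation and half-space Stokes uniqueness from \cite{Sl03}, \cite{A3}, control of the rescaled pressure through $t^{1/2}\|d\nabla q\|_{\infty}$) are the right ones; it would in fact prove a stronger, semigroup-free statement. But the step you flag as the ``main obstacle'' is a real one and cannot be discharged by citing Proposition 2.3 as stated: those Schauder constants are for the fixed domain $\Omega$, and you need them uniformly over the dilated domains $\Omega_k$ whose boundaries flatten as $k\to\infty$; securing that uniformity (and the limit passage in the weak form near $s=0$) amounts to re-running the machinery of \cite{AG1}, \cite{AG2} rather than using their conclusions. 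The paper's decomposition buys a short soft-analysis proof; your route buys generality at the cost of redoing the hardest estimates in the underlying theory.
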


\begin{proof}
Since $S(t)$ is a $C_0$-analytic semigroup on $BUC_{\sigma}$ \cite{AG2}, $S(t)u_0\in C([0,T]; BUC_{\sigma})$. Moreover,  $t^{1/2}\nabla S(t)u_0$ is continuous and bounded for $t\in (0,T]$ in $BUC$. We show that $t^{1/2}||\nabla S(t)u_0||_{\infty}\to 0$ as $t\to0$. 

We divide $u_0$ into two terms by using the Bogovski{\u\i} operator. For $u_0\in BUC_{\sigma}$, there exists $u_{0}^{1}\in C_{0,\sigma}$ with compact support in $\overline{\Omega}$ and $u_{0}^{2}\in BUC_{\sigma}$ supported away from $\partial\Omega$ such that $u_0=u_0^{1}+u_0^{2}$ (see \cite[Lemma 5.1]{AG2}). Let $A$ denote the Stokes operator and $D(A)$ denote the domain of $A$ in $BUC_{\sigma}$. Since $S(t)$ is a $C_0$-semigroup on $BUC_{\sigma}$, $D(A)$ is dense in $BUC_{\sigma}$. It follows from the resolvent estimate \cite[Theorem 1.3]{AGH} that 
\begin{align*}
||\nabla v||_{\infty}\leq C(||v||_{\infty}+||Av||_{\infty})\quad \textrm{for}\ v\in D(A).  \tag{3.5}
\end{align*}
We take an arbitrary $\epsilon>0$. For $u_{0}^{1}\in C_{0,\sigma}$, there exists $\{u_{0,m}^{1}\}\subset C_{c,\sigma}^{\infty}$ such that $||u_{0}^{1}-u_{0,m}^{1}||_{\infty}\leq \epsilon$ for $m\geq N^{1}_{\epsilon}$. We apply (3.5) and observe that 
\begin{align*}
t^{\frac{1}{2}}||\nabla S(t)u_{0,m}^{1}||_{\infty}
&\leq  t^{\frac{1}{2}}C\big(||S(t)u_{0,m}^{1}||_{\infty}+||S(t)Au_{0,m}^{1}||_{\infty}\big)\\
&\leq t^{\frac{1}{2}}C'\big(||u_{0,m}^{1}||_{\infty}+||Au_{0,m}^{1}||_{\infty}\big)
\to 0\quad \textrm{as}\ t\to0.
\end{align*}
We estimate
\begin{align*}
\overline{\lim_{t\to 0}}t^{\frac{1}{2}}||\nabla S(t)u_0^{1}||_{\infty}
&\leq \overline{\lim_{t\to 0}}\big(t^{\frac{1}{2}}||\nabla S(t)(u_0^{1}-u_{0,m}^{1})||_{\infty}
+t^{\frac{1}{2}}||\nabla S(t)u_{0,m}^{1}||_{\infty}\big) \\
&\leq C''\epsilon.
\end{align*}
We set $u_{0,m}^{2}=\eta_{\delta_m}*u_{0}^{2}$ by the mollifier $\eta_{\delta_{m}}$ so that $u_{0,m}^{2}$ is smooth in $\overline{\Omega}$ and $||u_{0}^{2}-u_{0,m}^{2}||_{\infty}\leq \epsilon$ for $m\geq N_{\epsilon}^{2}$. Since $u_{0,m}^{2}$ is supported away from $\partial\Omega$, we have $AS(t)u_{0,m}^{2}=S(t)\Delta u_{0,m}^{2}$ (see \cite[Proposition 6.1]{AG2}). By a similar way as for $u_{0}^{1}$, we estimate $\overline{\lim}_{t\to 0}t^{1/2}||\nabla S(t)u_0^{2}||_{\infty}\leq C''\epsilon$. We proved 
\begin{align*}
\overline{\lim}_{t\to 0}t^{\frac{1}{2}}||\nabla S(t)u_0||_{\infty}\leq 2C''\epsilon.
\end{align*}
Since $\epsilon>0$ is arbitrary, we proved $t^{1/2}||\nabla S(t)u_0||_{\infty}\to 0$ as $t\to 0$. 
\end{proof}

\vspace{5pt}
\begin{proof}[Proof of Theorem 1.1]
The assertion follows from Propositions 3.1-3.4. The proof is now complete.
\end{proof}

\vspace{5pt}

\begin{rem}
We set the associated pressure of mild solutions on $L^{\infty}$ by (1.7) and the harmonic-pressure operator $\mathbb{K}: L^{\infty}_{\textrm{tan}}(\partial\Omega)\longrightarrow L^{\infty}_{d}(\Omega)$, which is a solution operator of the homogeneous Neumann problem, 
\begin{align*}
\Delta q&=0\quad \textrm{in}\ \Omega,\\
\frac{\partial q}{\partial n}&=\D_{\partial\Omega}W\quad \textrm{on}\ \partial\Omega.
\end{align*}
Note that $\Delta u\cdot n=\D_{\partial\Omega}W$ by the divergence-free condition of $u$. Here, $L^{\infty}_{\textrm{tan}}(\partial\Omega)$ denotes the space of all bounded tangential vector fields on $\partial\Omega$ and $L^{\infty}_{d}(\Omega)$ is the space of all functions $f\in L^{1}_{\textrm{loc}}(\Omega)$ such that $df$ is bounded in $\Omega$ for $d(x)=\inf_{y\in \partial\Omega}|x-y|$, $x\in \Omega$. Since $W=-(\nabla u-\nabla^{T}u)n$ is bounded on $\partial\Omega$ for mild solutions on $L^{\infty}$, $\nabla q=\mathbb{K}W$ is defined as an element of $L^{\infty}_{d}$. Moreover, $\mathbb{Q}\D F$ is uniquely defined for $F=uu\in W^{1,\infty}_{0}$ as a distribution by Remarks 2.9 (i). Thus the associated pressure is defined by (1.7) for mild solutions on $L^{\infty}$.
\end{rem}

\vspace{10pt}

\section*{acknowledgements}
The author is grateful to the anonymous referees for their valuable comments. This work was partially supported by JSPS through the Grant-in-aid for Research Activity Start-up 15H06312 and Kyoto University Research Founds for Young Scientists (Start-up) FY2015.

\vspace{10pt}

\appendix

\section{$L^{1}$-estimates for the Neumann problem}

\vspace{5pt}
In Appendix A, we prove that $\nabla \p\varphi\in L^{1}(\Omega)$, $\varphi\in C_{c}^{\infty}(\Omega)$, for an exterior domain $\Omega$. We first estimate $L^{1}$-norms of solutions for the Poisson equation in $\mathbb{R}^{n}$ by using the heat semigroup. Then, we reduce the problem to the homogeneous Neumann problem and estimate solutions by a layer potential.  
\vspace{5pt}

\begin{lem}
Let $\Omega$ be an exterior domain with $C^{2}$-boundary in $\mathbb{R}^{n}$, $n\geq 2$. Then, $\nabla \mathbb{P}\varphi\in L^{1}(\Omega)$ for $\varphi\in C_{c}^{\infty}(\Omega)$.
\end{lem}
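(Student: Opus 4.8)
The plan is to reduce the claim to an $L^{1}$-bound on the Hessian of the scalar potential in the Helmholtz decomposition. Writing $\varphi=\p\varphi+\nabla\pi$, one has $\nabla\p\varphi=\nabla\varphi-\nabla^{2}\pi$; since $\nabla\varphi$ has compact support it lies in $L^{1}(\Omega)$, so it suffices to prove $\nabla^{2}\pi\in L^{1}(\Omega)$. Here $\pi$ solves the Neumann problem $\Delta\pi=\D\,\varphi$ in $\Omega$, $\partial_{n}\pi=\varphi\cdot n=0$ on $\partial\Omega$ (the boundary datum vanishes because $\varphi$ is supported away from $\partial\Omega$). I would split $\pi=\pi_{0}+h$, where $\pi_{0}$ is the Newtonian potential solving $\Delta\pi_{0}=\D\,\varphi$ in $\R$ (extending $\varphi$ by zero), and $h$ is the harmonic corrector solving $\Delta h=0$ in $\Omega$ with $\partial_{n}h=-\partial_{n}\pi_{0}$ on $\partial\Omega$. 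Then $\nabla^{2}\pi=\nabla^{2}\pi_{0}+\nabla^{2}h$ and it is enough to treat the two terms separately.

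For the whole-space part I would represent its Hessian through the heat semigroup,
\[
\nabla^{2}\pi_{0}=-\int_{0}^{\infty}\nabla^{2}e^{t\Delta}\D\,\varphi\,\dd t,
\]
the convergence in $L^{1}(\R)$ being exactly what the endpoint estimates below supply. The only issue is integrability in $t$ at the two endpoints, which I handle by distributing derivatives differently. For $t\in(0,1]$ I keep the derivatives on $\varphi$: since $e^{t\Delta}$ is an $L^{1}$-contraction, $\|\nabla^{2}e^{t\Delta}\D\,\varphi\|_{1}=\|e^{t\Delta}\nabla^{3}\varphi\|_{1}\le\|\nabla^{3}\varphi\|_{1}$, which is bounded uniformly and hence integrable near $t=0$. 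For $t\ge1$ I move all three derivatives onto the kernel $G_{t}$ and use the standard bound $\|\nabla^{3}G_{t}\|_{1}\le Ct^{-3/2}$, so that $\|\nabla^{2}e^{t\Delta}\D\,\varphi\|_{1}=\|\nabla^{3}e^{t\Delta}\varphi\|_{1}\le Ct^{-3/2}\|\varphi\|_{1}$, integrable on $[1,\infty)$. This yields $\nabla^{2}\pi_{0}\in L^{1}(\R)$ uniformly in $n\ge2$, the needed cancellation (mean-zero of $\D\,\varphi$) being encoded automatically in the convergence of the $t$-integral.

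For the harmonic corrector I would represent $h$ by a single-layer potential $h(x)=\int_{\partial\Omega}E(x-y)\psi(y)\,\dd\sigma(y)$, with $E$ the fundamental solution of $-\Delta$, and fix the density $\psi$ from the jump relation $\bigl(-\tfrac{1}{2}I+K^{\ast}\bigr)\psi=-\partial_{n}\pi_{0}$ on $\partial\Omega$. Because $\varphi$ is supported away from $\partial\Omega$, $\pi_{0}$ is harmonic near $\partial\Omega$ and the datum $g=-\partial_{n}\pi_{0}$ is smooth on the compact $C^{2}$-surface $\partial\Omega$, so classical potential theory yields a suitably regular $\psi$. Near infinity, integrability of $\nabla^{2}h$ forces the total charge to vanish, $\int_{\partial\Omega}\psi\,\dd\sigma=0$; I would obtain this from the compatibility identity $\int_{\partial\Omega}g\,\dd\sigma=0$, itself a consequence of $\int_{\Omega}\D\,\varphi\,\dd x=0$ together with the decay $\nabla\pi_{0}=O(|x|^{-n})$ at infinity. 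With zero charge one gets $h(x)=O(|x|^{-(n-1)})$ and $\nabla^{2}h(x)=O(|x|^{-(n+1)})$, whence $\int_{|x|>R}|\nabla^{2}h|\,\dd x<\infty$. On the intermediate bounded region away from $\partial\Omega$, $h$ is harmonic and $\nabla^{2}h$ is bounded, so it is trivially integrable there.

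The step I expect to be the main obstacle is the control of $\nabla^{2}h$ in a boundary collar $\{d(x)<\epsilon\}$: since $\partial\Omega$ is only $C^{2}$, the second derivatives of the single-layer potential are governed by a Calder\'on--Zygmund kernel and need not extend continuously up to $\partial\Omega$. The point to make rigorous is that the boundary regularity still confines $\nabla^{2}h$ to an integrable singularity (of $d(x)^{-1+\alpha}$-type) on the compact collar, which suffices for $L^{1}$ near $\partial\Omega$. Combining the three regions gives $\nabla^{2}h\in L^{1}(\Omega)$, and together with the whole-space estimate this completes the proof that $\nabla\p\varphi\in L^{1}(\Omega)$.
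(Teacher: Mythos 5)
Your argument is essentially the paper's: the same reduction to the Hessian of the scalar Neumann potential, the same split into a whole-space Newtonian part (whose $L^{1}$-bound is obtained by exactly the paper's heat-semigroup device, cutting the $t$-integral at $t=1$ and redistributing the three derivatives) and a harmonic corrector represented by a single layer potential with mean-zero density, whose decay $O(|x|^{-(n+1)})$ gives integrability at infinity. Two remarks on the points you leave open. First, the boundary collar is not actually an obstacle, and you should not try to differentiate the layer potential twice up to $\partial\Omega$: on the collar write $\nabla^{2}h=\nabla^{2}\pi-\nabla^{2}\pi_{0}$; the term $\nabla^{2}\pi_{0}$ is already in $L^{1}(\mathbb{R}^{n})$, and $\pi$ is harmonic in a neighborhood of $\partial\Omega$ (where $\varphi$ vanishes) with homogeneous Neumann data and $\nabla\pi=\q\varphi\in L^{2}$, so $W^{2,p}$ elliptic regularity up to the $C^{2}$ boundary gives $\nabla^{2}\pi\in L^{1}$ on the compact collar; the layer potential is needed only for the behavior at infinity, which is how the paper disposes of that region. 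Second, for the Fredholm solvability of the integral equation for the density one wants the compatibility $\int g\,\dd\mathcal{H}=0$ on each connected component of $\partial\Omega$, not merely globally; your flux-at-infinity argument yields only the global identity, but the componentwise one follows at once by integrating $\Delta\pi_{0}=0$ over each bounded component of $\Omega^{c}$ (the paper instead obtains it by exhibiting $g$ as a surface divergence). With these two points supplied, your proof coincides with the paper's.
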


\vspace{5pt}

We set $\nabla \Phi=\q\varphi$ for $\q=I-\p$. It suffices to show that $\nabla^{2}\Phi$ is integrable in $\Omega$. We recall that the $\Phi$ solves the Neumann problem
\begin{equation*}
\begin{aligned}
\Delta \Phi=\D \varphi\quad \textrm{in}\ \Omega,\\
\frac{\partial \varphi}{\partial n}=0\quad \textrm{on}\ \partial\Omega.
\end{aligned}
\tag{A.1}
\end{equation*}
See \cite[Lemma III.1.2]{Gal}. We observe that $\Phi\in C^{2}(\Omega)\cap C^{1}(\overline{\Omega})$ by the elliptic regularity theory (e.g., \cite[Teor. 4.1]{LMp}) since $\varphi$ is smooth in $\Omega$ and the boundary is $C^{2}$. We may assume that $0\in \Omega^{c}$ by translation. We take $R>0$ such that $\Omega^{c}\subset B_{0}(R)$. Let $E$ denote the fundamental solution of the Laplace equation, i.e., $E(x)=C_{n}|x|^{-(n-2)} $ for $n\geq 3$ and $E(x)=-(2\pi)^{-1}\log{|x|}$ for $n=2$, where $C_{n}=(an(n-2))^{-1}$ and $a$ denotes the volume of $n$-dimensional unit ball. We first show that the statement of Lemma A.1 is valid for $\Omega=\mathbb{R}^{n}$. In the sequel, we do not distinguish $\varphi\in C_{c}^{\infty}(\Omega)$ and its zero extension to $\mathbb{R}^{n}\backslash \Omega$.

\vspace{5pt}

\begin{prop}
Set $h=E*\varphi$ and $\Phi_1=-\textrm{div}\ h$. Then, $\nabla^{3}h$ is integrable in $\mathbb{R}^{n}$. In particular, $\nabla^{2}\Phi_1\in L^{1}(\mathbb{R}^{n})$.  
\end{prop}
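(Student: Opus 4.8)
The plan is to represent the Newtonian potential through the Gaussian (heat) semigroup and to estimate its spatial derivatives in $L^{1}(\R)$ by splitting the time integral into a small-time and a large-time regime. Write $G_{t}(x)=(4\pi t)^{-n/2}e^{-|x|^{2}/(4t)}$ for the heat kernel and $e^{t\Delta}\psi=G_{t}*\psi$. Since $-\Delta E=\delta$, the potential $h=E*\varphi$ solves $-\Delta h=\varphi$ componentwise, and formally $(-\Delta)^{-1}=\int_{0}^{\infty}e^{t\Delta}\dd t$. First I would justify this representation at the level of third derivatives. Setting $H_{T}=\int_{0}^{T}e^{t\Delta}\varphi\,\dd t$, the identity $\Delta H_{T}=e^{T\Delta}\varphi-\varphi$ gives $-\Delta(h-H_{T})=e^{T\Delta}\varphi$, hence $h-H_{T}=\int_{T}^{\infty}e^{t\Delta}\varphi\,\dd t$, and therefore
\[
\nabla^{3}h-\int_{0}^{T}\nabla^{3}e^{t\Delta}\varphi\,\dd t=\int_{T}^{\infty}\nabla^{3}e^{t\Delta}\varphi\,\dd t,
\]
so that $\nabla^{3}h=\int_{0}^{\infty}\nabla^{3}e^{t\Delta}\varphi\,\dd t$ in $L^{1}(\R)$ once the time integral of $\big\|\nabla^{3}e^{t\Delta}\varphi\big\|_{L^{1}(\R)}$ is shown to converge.

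The key estimates are the $L^{1}$-bounds for the heat kernel, $\big\|\nabla^{k}G_{t}\big\|_{L^{1}(\R)}\leq Ct^{-k/2}$, which follow from the scaling $G_{t}(x)=t^{-n/2}G_{1}(x/\sqrt{t})$ and hold for every $n\geq 2$. The point is to distribute the three derivatives differently in the two regimes. For large $t$ I place all three derivatives on the kernel and use Young's inequality,
\[
\big\|\nabla^{3}e^{t\Delta}\varphi\big\|_{L^{1}(\R)}=\big\|(\nabla^{3}G_{t})*\varphi\big\|_{L^{1}(\R)}\leq \big\|\nabla^{3}G_{t}\big\|_{L^{1}(\R)}\big\|\varphi\big\|_{L^{1}(\R)}\leq Ct^{-3/2}\big\|\varphi\big\|_{L^{1}(\R)},
\]
which is integrable near $t=\infty$. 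For small $t$ I place the derivatives on $\varphi$,
\[
\big\|\nabla^{3}e^{t\Delta}\varphi\big\|_{L^{1}(\R)}=\big\|e^{t\Delta}\nabla^{3}\varphi\big\|_{L^{1}(\R)}\leq \big\|G_{t}\big\|_{L^{1}(\R)}\big\|\nabla^{3}\varphi\big\|_{L^{1}(\R)}=\big\|\nabla^{3}\varphi\big\|_{L^{1}(\R)},
\]
which is integrable on $(0,1]$. Adding the two contributions gives $\int_{0}^{\infty}\big\|\nabla^{3}e^{t\Delta}\varphi\big\|_{L^{1}(\R)}\,\dd t<\infty$.

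This convergence simultaneously validates the representation of the first paragraph, since the tail $\int_{T}^{\infty}\nabla^{3}e^{t\Delta}\varphi\,\dd t$ tends to $0$ in $L^{1}(\R)$ as $T\to\infty$. Minkowski's integral inequality then yields
\[
\big\|\nabla^{3}h\big\|_{L^{1}(\R)}\leq \int_{0}^{\infty}\big\|\nabla^{3}e^{t\Delta}\varphi\big\|_{L^{1}(\R)}\,\dd t<\infty,
\]
so $\nabla^{3}h\in L^{1}(\R)$. For the final assertion, since $\Phi_{1}=-\D\,h=-\sum_{k}\partial_{k}h_{k}$, every second derivative $\partial_{i}\partial_{j}\Phi_{1}=-\sum_{k}\partial_{i}\partial_{j}\partial_{k}h_{k}$ is a component of $\nabla^{3}h$, whence $\nabla^{2}\Phi_{1}\in L^{1}(\R)$.

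I expect the main obstacle to lie in the interplay of the two time regimes, and especially in the case $n=2$. There the potential $h$ itself grows logarithmically and the representation $h=\int_{0}^{\infty}e^{t\Delta}\varphi\,\dd t$ diverges, so one cannot argue directly at the level of $h$; the argument must be carried out at the level of the third derivatives, where the gain of the extra factor $t^{-3/2}$ at large $t$ is exactly what forces the time integral to converge for every $n\geq 2$. Choosing the correct splitting of the derivatives between kernel and data in each regime, and carefully checking that the limit of $\nabla^{3}H_{T}$ is genuinely $\nabla^{3}h$, is the crux of the proof.
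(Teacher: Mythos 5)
Your proposal is correct and follows essentially the same route as the paper: both represent $h$ via the heat semigroup, split the time integral at $t=1$, and use Young's inequality with $\|\nabla^{k}G_{t}\|_{L^{1}}\lesssim t^{-k/2}$, placing the derivatives on $\varphi$ for small $t$ and on the kernel for large $t$. The only cosmetic difference is that the paper writes the small-time piece as $\int_{0}^{1}\partial_{x}e^{t\Delta}\partial_{x}^{2}\varphi\,\dd t$ (one derivative on the kernel, two on $\varphi$) where you put all three on $\varphi$; both yield a convergent integral, and your extra care in justifying the representation $\nabla^{3}h=\int_{0}^{\infty}\nabla^{3}e^{t\Delta}\varphi\,\dd t$ (especially for $n=2$) only tightens a step the paper leaves implicit.
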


\begin{proof}
By using the heat semigroup, we transform $h$ into  
\begin{align*}
h=\int_{0}^{\infty}e^{t\Delta}\varphi \dd t.
\end{align*}
We divide $h$ into two terms and observe that
\begin{align*}
\partial^{3}_{x}h=\int_{0}^{1}\partial_{x} e^{t\Delta}\partial^{2}_{x}\varphi \dd t+\int_{1}^{\infty}\partial^{3}_{x}e^{t\Delta}\varphi \dd t,
\end{align*}
where $\partial_{x}=\partial_{x_j}$ indiscriminately denotes the spatial derivatives for $j=1,\cdots n$. We estimate 
\begin{align*}
||\partial^{3}_{x} h||_{L^{1}(\mathbb{R}^{n})}
&\lesssim\int_{0}^{1}\frac{1}{t^{1/2}}||\partial^{2}_{x} \varphi||_{L^{1}(\mathbb{R}^{n})} \dd t+\int_{1}^{\infty}\frac{1}{t^{3/2}}||\varphi||_{L^{1}(\mathbb{R}^{n})} \dd t\\
&\lesssim ||\partial^{2}_{x}\varphi||_{L^{1}(\mathbb{R}^{n})}+||\varphi||_{L^{1}(\mathbb{R}^{n})}.
\end{align*}
We proved $\nabla^{3}h\in L^{1}(\mathbb{R}^{n})$.
\end{proof}

\vspace{5pt}

We reduce (A.1) to the homogeneous Neumann problem 
\begin{equation*}
\begin{aligned}
-\Delta \Phi_2&=0\quad \textrm{in}\ \Omega,\\
\frac{\partial \Phi_2}{\partial n}&=g\quad \textrm{on}\ \partial \Omega.
\end{aligned}
\tag{A.2}
\end{equation*}
We write connected components of $\Omega$ by unbounded $\Omega_0$ and bounded $\Omega_1$, $\cdots$, $\Omega_N$, i.e., 
$\Omega=\Omega_{0}\cup (\cup_{j=1}^{N}\Omega_{j})$.

\vspace{5pt}

\begin{prop}
Set $\Phi_2=\Phi-\Phi_1$. Then, $\Phi_2\in C^{2}(\Omega)\cap C^{1}(\overline{\Omega})$ solves (A.2) for $g=\textrm{div}_{\partial \Omega}(An)$ and $A=\nabla h-\nabla^{T} h$. The function $g\in C(\partial\Omega)$ satisfies 
\begin{align*}
\int_{\partial\Omega_{j}}g \dd {\mathcal{H}}=0\quad\textrm{for}\ j=0,1,\cdots,N.  \tag{A.3}
\end{align*}
\end{prop}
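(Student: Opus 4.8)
The plan is to verify the three assertions of Proposition A.3 in turn: that $\Phi_2=\Phi-\Phi_1$ is harmonic in $\Omega$ with the stated regularity, that its Neumann data equals $\D_{\partial\Omega}(An)$, and that this data obeys the compatibility conditions (A.3).

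First I would settle harmonicity and regularity. Since $h=E*\varphi$ is the Newtonian potential of $\varphi\in C_c^{\infty}(\R)$ (extended by zero), we have $h\in C^{\infty}(\R)$ and $\Delta h=-\varphi$ by the normalization of $E$; hence $\Phi_1=-\D h\in C^{\infty}(\R)$ and $\Delta\Phi_1=-\D\Delta h=\D\varphi$. As $\Phi$ solves (A.1), in particular $\Delta\Phi=\D\varphi$, so subtraction gives $\Delta\Phi_2=0$ in $\Omega$. The regularity $\Phi_2\in C^{2}(\Omega)\cap C^{1}(\overline{\Omega})$ is then immediate from $\Phi_1\in C^{\infty}$ together with the regularity of $\Phi$ recalled before the proposition.

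Next comes the boundary condition, which is the heart of the matter. Since $\partial\Phi/\partial n=0$ on $\partial\Omega$, we have $g=\partial\Phi_2/\partial n=-\partial\Phi_1/\partial n=n\cdot\nabla\D h$. I would use the pointwise tensor identity $\nabla\D h=\D A+\Delta h$ for the skew part $A=\nabla h-\nabla^{T}h$, which follows by rearranging $\sum_j\partial_j(\partial_i h_j-\partial_j h_i)=\partial_i\D h-\Delta h_i$. Because $\varphi\equiv 0$ near $\partial\Omega$, the term $\Delta h=-\varphi$ drops out there, leaving $g=n\cdot\D A$ on $\partial\Omega$. It then remains to convert this normal trace into the surface divergence $\D_{\partial\Omega}(An)$. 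For this I would extend $n$ to a neighbourhood as $\nabla d$, with $d$ the signed distance to $\partial\Omega$, so that $\nabla n$ is symmetric and $(n\cdot\nabla)n=0$; expanding $\D_{\partial\Omega}(An)=\sum_i\partial_i(An)_i-\sum_{i,k}n_in_k\partial_k(An)_i$ and contracting the symmetric factors $n_in_j$ and $\partial_i n_j$ against the skew-symmetric $A_{ij}$, every derivative-of-$n$ contribution cancels and one is left with exactly $n\cdot\D A$ (with the sign fixed once the convention $(\nabla h)_{ij}=\partial_j h_i$ is adopted), hence $g=\D_{\partial\Omega}(An)$. This is the step I expect to be the main obstacle, since it is the one genuinely geometric computation and it relies delicately on the antisymmetry of $A$ together with the second-order behaviour of the extended normal.

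Finally I would establish the compatibility conditions (A.3). The crucial structural observation is that $v:=\D A$ is divergence-free: $\D v=\sum_{i,j}\partial_i\partial_j A_{ij}=0$ by skew-symmetry, and $v\in C^{\infty}(\R)$. For a bounded component $\Omega_j$, $1\le j\le N$, the divergence theorem gives directly $\int_{\partial\Omega_j}g\,\dd\mathcal H=\pm\int_{\partial\Omega_j}v\cdot n\,\dd\mathcal H=\pm\int_{\Omega_j}\D v\,\dd x=0$. For the unbounded component $\Omega_0$ I would integrate over $\Omega_0\cap B_0(R)$ and let $R\to\infty$: since $\varphi$ has compact support, $v=\nabla\D h$ behaves like $\nabla^{2}E$ at infinity, so $v=O(|x|^{-n})$ and the flux through $\partial B_0(R)$ is $O(R^{-1})\to 0$, leaving $\int_{\partial\Omega_0}g\,\dd\mathcal H=0$. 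Continuity $g\in C(\partial\Omega)$ follows from $A\in C^{\infty}$ and $n\in C^{1}$ (as $\partial\Omega$ is $C^{2}$), since $\D_{\partial\Omega}(An)$ involves only one tangential derivative of the $C^{1}$ field $An$.
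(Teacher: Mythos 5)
Your proposal is correct, but it reaches the two key conclusions by genuinely different mechanisms than the paper. For the boundary condition you argue pointwise: from the algebraic identity relating $\nabla \D\ h$, $\D\ A$ and $\Delta h$, together with $-\Delta h=\varphi\equiv 0$ near $\partial\Omega$, you get $g=n\cdot \D\ A$ on $\partial\Omega$, and you then identify $n\cdot\D\ A$ with $\D_{\partial\Omega}(An)$ by extending $n=\nabla d$ and cancelling every derivative-of-$n$ term against the antisymmetry of $A$ (this computation is right: $\partial_i n_j=\partial_i\partial_j d$ is symmetric and $(n\cdot\nabla)n=0$ for the unit-length gradient extension, so all three curvature contributions vanish). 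The paper never computes pointwise: it tests $\D_{\partial\Omega}(An)$ against an arbitrary $\rho\in C_{c}^{\infty}(\mathbb{R}^{n})$, moves the surface divergence onto $\rho$ using only that $An$ is tangential, converts the surface integrals to bulk integrals by the divergence theorem, uses that $-\Delta h=\varphi$ vanishes near $\partial\Omega$, and identifies the resulting functional with $\int_{\partial\Omega}\partial_n(\D\ h)\rho\,\dd\mathcal{H}$; arbitrariness of $\rho$ then gives $g=\partial_n(\D\ h)$, and continuity of $g$ comes from the $C^{1}$ extension of $n$. Your route buys an explicit pointwise identity at the cost of the second-fundamental-form bookkeeping; the paper's duality route needs only tangentiality of $An$. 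For (A.3) you apply the bulk divergence theorem to the globally divergence-free field $\D\ A$, which requires a separate decay estimate at infinity for the unbounded component, whereas the paper simply integrates the surface divergence of a tangential field over each closed component $\partial\Omega_{j}$, which vanishes with no decay argument. Both arguments are valid; just fix the sign convention once (with the paper's $A_{ij}=\partial_j h^{i}-\partial_i h^{j}$ and $(\D\ A)_j=\sum_i\partial_i A_{ij}$ one gets $(\D\ A)_j=\partial_j\D\ h-\Delta h^{j}$, hence $n\cdot\D\ A=\partial_n(\D\ h)=\partial\Phi_2/\partial n$ near $\partial\Omega$, as required).
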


\begin{proof}
We observe that $\Phi_2\in C^{2}(\Omega)\cap C^{1}(\overline{\Omega})$ satisfies $-\Delta \Phi_2=0$ in $\Omega$ and $\partial\Phi_2/\partial n=\partial (\D h)/\partial n$ on $\partial\Omega$. We take an arbitrary $\rho\in C_{c}^{\infty}(\mathbb{R}^{n})$. Since $An=(\sum_{1\leq j\leq n}(\partial_jh^{i}-\partial_ih^{j})n^{j})_{1\leq i\leq n}$ is a tangential vector field on $\partial\Omega$ (i.e., $An\cdot n=0$ on $\partial\Omega$), applying integration by parts yields 
\begin{align*}
\int_{\partial\Omega} g\rho\dd {\mathcal{H}}
&=\int_{\partial\Omega}\textrm{div}_{\partial\Omega}(An)\rho\dd {\mathcal{H}}\\
&=-\int_{\partial\Omega}(An)\cdot \nabla \rho\dd {\mathcal{H}}\\
&=-\int_{\partial\Omega}(\partial_jh^{i}-\partial_ih^{j})n^{j}\partial_i \rho \dd{\mathcal{H}}\\
&=-\int_{\partial\Omega}\partial_j h^{i}n^{i}\partial_i \rho  \dd{\mathcal{H}}+\int_{\partial\Omega}\partial_j h^{i}n^{i}\partial_j \rho  \dd{\mathcal{H}},
\end{align*}
where the symbol of summation is suppressed. By integration by parts, we have 
\begin{align*}
\int_{\partial\Omega}\partial_jh^{i}n^{j}\partial_i\rho  \dd{\mathcal{H}}
&=\int_{\partial\Omega}(\Delta h^{i}\partial_i\rho+\nabla h^{i}\cdot \nabla \partial_i \rho)  \dd x\\
&=\int_{\partial\Omega}(\Delta h^{i}\partial_i\rho-\nabla \D h\cdot \nabla  \rho)  \dd x
+\int_{\partial\Omega}\nabla h^{i}\cdot \nabla \rho n^{i}\dd {\mathcal{H}}.
\end{align*}
Since $-\Delta h=\varphi$ is supported in $\Omega$, it follows that 
\begin{align*}
\int_{\partial\Omega}g\rho{\mathcal{H}}
&=-\int_{\Omega}(\Delta h-\nabla \D\ h)\cdot \nabla \rho\dd x \\
&=-\int_{\Omega}(\Delta h\cdot \nabla \rho+\Delta \D\ h\rho)\dd x
+\int_{\partial\Omega}\frac{\partial}{\partial n}\D\ h\rho \dd {\mathcal{H}}\\
&=\int_{\partial\Omega}\frac{\partial}{\partial n}\D\ h\rho \dd {\mathcal{H}}.
\end{align*}
Since $\partial\Omega$ is $C^{2}$, $n$ is extendable to a $C^{1}$-function in a tubular neighborhood of $\partial\Omega$. Thus, $g$ is continuous on $\partial\Omega$. Since $\rho\in C_{c}^{\infty}(\mathbb{R}^{n})$ is arbitrary, we proved $\partial (\D\ h)/\partial n=g$ on $\partial\Omega$. Since $g$ is a surface-divergence form, by integration by parts, (A.3) follows. The proof is complete.    
\end{proof}

\vspace{5pt}

We estimate $\Phi_2$ by means of the layer potential.

\vspace{5pt}

\begin{prop}
(i) For $g\in C(\partial\Omega)$ satisfying (A.3), there exists a moment $h\in C(\partial\Omega)$ satisfying $\int_{\partial\Omega}h\dd{\mathcal{H}}=0$ and 
\begin{align*}
-g(x)=\frac{1}{2}h(x)+\int_{\partial\Omega}n(x)\cdot \nabla_x E(x-y)h(y)\dd {\mathcal{H}}(y)\quad x\in \partial\Omega.
\end{align*}
(ii) Set the single layer potential 
\begin{align*}
\tilde{\Phi}_{2}(x)=-\int_{\partial\Omega}E(x-y)h(y)\dd {\mathcal{H}}(y).
\end{align*}
Then, $\tilde{\Phi}_{2}$ is continuous in $\overline{\Omega}$. Moreover, the normal derivative $\partial_n \tilde{\Phi}_{2}$ exits and is continuous on $\partial\Omega$. The function $\tilde{\Phi}_{2}$ satisfies (A.2) and decays as $|x|\to \infty$. 
\end{prop}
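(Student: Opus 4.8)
The plan is to solve the homogeneous Neumann problem (A.2) by a single-layer ansatz and to read off the properties of $\tilde\Phi_2$ from the classical mapping properties of that potential; statement (i) is precisely the boundary integral equation produced by this ansatz, and (ii) records the behaviour of the resulting potential.

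For (i), write $S[h](x)=\int_{\partial\Omega}E(x-y)h(y)\dd\mathcal{H}(y)$ for the single layer and let $\mathcal{K}$ be the operator with kernel $n(x)\cdot\nabla_xE(x-y)$, so that the asserted identity reads $(\tfrac12 I+\mathcal{K})h=-g$. Since $\partial\Omega$ is $C^{2}$ one has $|n(x)\cdot(x-y)|\leq C|x-y|^{2}$ for $x,y\in\partial\Omega$, so the kernel is weakly singular of order $|x-y|^{-(n-2)}$ and $\mathcal{K}$ is compact on $C(\partial\Omega)$. I would then invoke the Riesz--Fredholm theory: $(\tfrac12 I+\mathcal{K})h=-g$ is solvable if and only if $-g$ is orthogonal to $\ker(\tfrac12 I+\mathcal{D})$, where $\mathcal{D}$ is the $L^{2}$-transpose of $\mathcal{K}$, namely the double-layer operator with kernel $n(y)\cdot\nabla_yE(x-y)$. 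The decisive step is to identify this null space. If $(\tfrac12 I+\mathcal{D})\psi=0$, then the double-layer potential $w=\mathcal{D}\psi$ has vanishing trace from the side of the bounded obstacle $\Omega^{c}$, so $w\equiv0$ there by uniqueness for the interior Dirichlet problem; continuity of the normal derivative of the double layer then forces $\partial_n w=0$ on $\partial\Omega$ from the $\Omega$ side as well, and uniqueness for the decaying exterior Neumann problem on $\Omega_0$ together with the Neumann problem on each bounded component $\Omega_1,\dots,\Omega_N$ shows that $w$ is locally constant on $\Omega$ and vanishes on $\Omega_0$. Reading off the trace from $\Omega$ then gives that $\psi$ is constant on each $\partial\Omega_j$ and zero on $\partial\Omega_0$, i.e. $\ker(\tfrac12 I+\mathcal{D})=\mathrm{span}\{\mathbf 1_{\partial\Omega_j}\}_{j=1}^{N}$. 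Hence the orthogonality conditions are $\int_{\partial\Omega_j}g\,\dd\mathcal{H}=0$ for $j=1,\dots,N$, which are contained in (A.3), the remaining condition $j=0$ being automatic once $\int_{\partial\Omega}g\,\dd\mathcal{H}=0$. This yields $h\in C(\partial\Omega)$. Finally, integrating $(\tfrac12 I+\mathcal{K})h=-g$ over $\partial\Omega$ and using the Gauss relation $\int_{\partial\Omega}n(x)\cdot\nabla_xE(x-y)\,\dd\mathcal{H}(x)=\tfrac12$ for $y\in\partial\Omega$ gives $\int_{\partial\Omega}h\,\dd\mathcal{H}=-\int_{\partial\Omega}g\,\dd\mathcal{H}=0$.

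For (ii), $\tilde\Phi_2=-S[h]$ is harmonic in $\Omega$ (and in $\Omega^{c}$), and the single layer of a continuous density is continuous across $\partial\Omega$, so $\tilde\Phi_2\in C(\overline\Omega)$. Solving the equation in a Hölder class $C^{\mu}(\partial\Omega)$ (the null functions $\mathbf 1_{\partial\Omega_j}$ being locally constant, the Fredholm analysis is unchanged) upgrades $h$ to $C^{\mu}$, so the classical jump relation for the normal derivative of the single layer applies and shows that $\partial_n\tilde\Phi_2$ exists and is continuous on $\partial\Omega$; by the very choice of $h$ this trace equals $g$, so $\tilde\Phi_2$ solves (A.2). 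For the decay, when $n\geq3$ the kernel $E(x-y)$ tends to $0$ as $|x|\to\infty$, whence $\tilde\Phi_2\to0$; when $n=2$ one writes $E(x-y)=-\tfrac1{2\pi}\log|x-y|$ and uses $\int_{\partial\Omega}h\,\dd\mathcal{H}=0$ to replace $\log|x-y|$ by $\log(|x-y|/|x|)\to0$, which again yields decay.

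The delicate points are the identification of $\ker(\tfrac12 I+\mathcal{D})$ with the span of the indicators of the bounded components' boundaries and its matching with (A.3); this is where the multiple connectedness of $\partial\Omega$ and the distinction between the bounded components $\Omega_1,\dots,\Omega_N$ and the unbounded component $\Omega_0$ of $\Omega$ genuinely enter, and where the redundancy of one of the $N+1$ conditions in (A.3) must be accounted for. The second subtle point is securing enough regularity of the density $h$ to legitimize the classical normal-derivative jump relation used in (ii), which is why the Hölder formulation of the Fredholm equation is invoked there.
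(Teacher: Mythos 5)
Your proposal is correct and follows essentially the same route as the paper: a single-layer ansatz, the Fredholm alternative for the weakly singular operator $\tfrac12 I+\mathcal{K}$, the classical jump relations, and the same subtraction of $E(x)$ via $\int_{\partial\Omega}h\,\dd\mathcal{H}=0$ to get decay when $n=2$. The only difference is one of detail: the paper disposes of (i) and the jump relation by citing Folland, whereas you unpack the cited Fredholm analysis (identification of $\ker(\tfrac12 I+\mathcal{D})$ with the indicators of the bounded components' boundaries, and the derivation of $\int_{\partial\Omega}h\,\dd\mathcal{H}=0$ from the Gauss relation), all of which checks out.
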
 

\begin{proof}
The assertion (i) is based on the Fredholm's theorem. See \cite[(3.40), (3.13), (3.30)]{Foll}. Since $h$ is bounded on $\partial\Omega$, $\tilde{\Phi}_{2}$ is continuous in $\overline{\Omega}$. Moreover, we have  
\begin{align*}
-\frac{\partial \tilde{\Phi}_{2}}{\partial n}(x)
=\frac{1}{2}h(x)+\int_{\partial\Omega}n(x)\cdot \nabla E(x-y)h(y)\dd {\mathcal{H}}(y)\quad x\in \partial\Omega.
\end{align*}
See \cite[(3.25), (3.28)]{Foll}. Thus $\tilde{\Phi}_{2}$ satisfies (A.2) by the assertion (i). When $n\geq 3$, $\tilde{\Phi}_{2}(x)\to 0$ as $|x|\to\infty$ since the fundamental solution decays as $|x|\to\infty$. Moreover, when $n=2$, the average of $h$ on $\partial\Omega$ is zero and we have 
\begin{align*}
\tilde{\Phi}_{2}(x)
&=-\int_{\partial\Omega}(E(x-y)-E(x))h(y)\dd {\mathcal{H}}(y)\\
&=\frac{1}{2\pi}\int_{\partial\Omega}\log{\Bigg(\frac{|x-y|}{|x|}\Bigg)}h(y)\dd \mathcal{H}(y)\to 0\quad \textrm{as}\ |x|\to\infty.
\end{align*}
The proof is complete.
\end{proof}

\vspace{5pt}

\begin{prop}
The function $\tilde{\Phi}_2$ agrees with ${\Phi}_2$ up to constant.
\end{prop}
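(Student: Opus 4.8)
The plan is to prove that $w:=\Phi_2-\tilde\Phi_2$ is constant on each connected component of $\Omega$ by a Dirichlet-integral energy identity. By Propositions A.3 and A.4, both $\Phi_2$ and $\tilde\Phi_2$ belong to $C^2(\Omega)\cap C^1(\overline\Omega)$, are harmonic in $\Omega$, and carry the same Neumann data $g$ on $\partial\Omega$; hence $w$ is harmonic in $\Omega$ with $\partial w/\partial n=0$ on $\partial\Omega$. The content of the proposition is thus uniqueness, up to an additive constant, for the homogeneous exterior Neumann problem, and the only genuine issue is the behaviour of $w$ at space infinity.

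First I would check that $w$ has finite Dirichlet integral, $\nabla w\in L^2(\Omega)$. For $\tilde\Phi_2$ this comes from the single-layer representation together with the moment condition $\int_{\partial\Omega}h\,\dd\mathcal H=0$ of Proposition A.4(i): since the leading term $\nabla E(x)\int_{\partial\Omega}h\,\dd\mathcal H$ vanishes, one gets $\nabla\tilde\Phi_2(x)=O(|x|^{-n})$, which is square integrable near infinity. For $\Phi_2=\Phi-\Phi_1$ I would use $\nabla\Phi=\q\varphi\in L^2(\Omega)$ (the Helmholtz projection being the orthogonal projection on $L^2$) together with the fact that $\nabla\Phi_1$, a second derivative of $h=E*\varphi$, equals $(\nabla^2E)*\varphi=O(|x|^{-n})\in L^2$. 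Hence $\nabla\Phi_2\in L^2(\Omega)$ and $\nabla w\in L^2(\Omega)$.

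Next, on the unbounded component $\Omega_0$ the function $w$ is harmonic with finite Dirichlet integral, so it converges to a constant $c_0$ at infinity (for $n=2$ the finiteness of the Dirichlet integral rules out the logarithmic mode). Replacing $w$ by $w-c_0$ on $\Omega_0$ and applying Green's identity on $\Omega\cap\{|x|<R\}$, the boundary integral over $\partial\Omega$ drops out because $\partial w/\partial n=0$, leaving
\[
\int_{\Omega\cap\{|x|<R\}}|\nabla w|^2\,\dd x=\int_{\{|x|=R\}}w\,\partial_r w\,\dd\mathcal H .
\]
Sending $R\to\infty$ and showing that the right-hand side tends to $0$ forces $\nabla w\equiv0$, so $w$ is constant on $\Omega_0$; on each bounded component the same identity, without a boundary-at-infinity term, gives constancy at once. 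This is precisely the assertion, the additive constant being allowed to vary from component to component (which is all that is needed, since the application only uses $\nabla^2\Phi_2=\nabla^2\tilde\Phi_2$).

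The main obstacle is the spherical boundary term $\int_{\{|x|=R\}}w\,\partial_r w\,\dd\mathcal H$. For $n\geq3$ I would use the pointwise decay of harmonic functions vanishing at infinity, $|w|=O(|x|^{-(n-2)})$ and $|\nabla w|=O(|x|^{-(n-1)})$, so that this term is $O(R^{-(n-2)})\to0$; for $n=2$, where those rates are too weak, I would instead combine the boundedness of $w$ with $\nabla w\in L^2$ and the elementary fact that $\liminf_{R\to\infty}R\int_{\{|x|=R\}}|\nabla w|^2\,\dd\mathcal H=0$ to make the term vanish along a sequence $R_k\to\infty$. Establishing these decay rates, or alternatively quoting a uniqueness theorem for the exterior Neumann problem with finite Dirichlet integral, is the one technical point; the rest is integration by parts.
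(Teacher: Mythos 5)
Your argument is correct, but it takes a genuinely different route from the paper. The paper first normalizes $\Phi_2\to 0$ at infinity (using $\nabla\Phi_2\in L^{p}$ for all $p\in(1,\infty)$), and then shows $\Psi=\Phi_2-\tilde{\Phi}_2\equiv 0$ by a pure maximum-principle argument: the supremum of the decaying harmonic function $\Psi$ is attained at some $x_0\in\overline{\Omega}$; Hopf's boundary point lemma (using the interior sphere condition of the $C^{2}$ boundary and $\partial\Psi/\partial n=0$) excludes $x_0\in\partial\Omega$, and the strong maximum principle then forces $\Psi$ to be constant, hence zero. You instead run an energy argument: $\nabla w\in L^{2}(\Omega)$ from the layer-potential decay with zero-mean density and from $\mathbb{Q}\varphi,\nabla^{2}E*\varphi\in L^{2}$, followed by Green's identity on $\Omega\cap\{|x|<R\}$ and control of the flux term on $\{|x|=R\}$. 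Both proofs are sound. The paper's approach is shorter and needs only qualitative decay of $\Psi$ at infinity, with no quantitative estimate of the spherical boundary term; the price is the appeal to Hopf's lemma and the strong maximum principle. Your approach requires the extra bookkeeping at infinity that you correctly identify as the main technical point (decay rates for $n\geq 3$, the $\liminf$ trick for $n=2$), but it is more elementary in the tools it uses, yields directly the conclusion $\nabla w\equiv 0$ that Proposition A.6 actually consumes, and handles the possibility of several connected components cleanly, whereas the paper's argument as written pins down the constant only on the component containing the maximum point. One small caveat: your claim that $w$ tends to a constant at infinity from finiteness of the Dirichlet integral (in particular the exclusion of the logarithmic mode for $n=2$) deserves a line of justification, e.g.\ via the expansion of $w$ in spherical harmonics outside a large ball.
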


\begin{proof}
Since $\nabla \Phi_2=\nabla \Phi-\nabla \Phi_1$ is $L^{p}$-integrable in $\Omega$ for all $p\in (1,\infty)$ (e.g., \cite{SiSo}), we may assume that $\Phi_2\to 0$ as $|x|\to\infty$ by shifting $\Phi_2$ by a constant. We set $\Psi=\Phi_2-\tilde{\Phi}_2 $ and observe that $\Psi$ is continuous in $\overline{\Omega}$. Moreover, the normal derivative exists and is continuous on $\partial\Omega$ by Proposition A.4. The function $\Psi$ satisfies $-\Delta\Psi=0$ in $\Omega$, $\partial\Psi/\partial n=0$ on $\partial\Omega$ and $\Psi\to 0$ as $|x|\to\infty$. By the elliptic regularity theory \cite{LMp}, $\Psi$ is smooth in $\Omega$ and continuously differentiable in $\overline{\Omega}$.

We shall show that $\Psi\equiv 0$. Since $\Psi$ decays as $|x|\to\infty$, there exits a point $x_0\in \overline{\Omega}$ such that $\sup_{x\in \Omega}\Psi(x)=\Psi(x_0)$. Suppose that $x_0\in \partial\Omega$. Since the boundary of class $C^{2}$ satisfies the interior sphere condition, the Hopf's lemma \cite[Chapter 2 Theorem 7]{PW} implies that $\partial \Psi(x_0)/\partial n>0$. Thus $x_0\in \Omega$. We apply the strong maximum principle \cite[Chapter 2 Theorem 5]{PW} and conclude that $\Psi$ is constant. Since $\Psi$ decays as $|x|\to \infty$, we have $\Psi\equiv 0$. The proof is complete. 
\end{proof}

\vspace{5pt}

\begin{prop}
$\nabla^{2}\Phi_2$ is integrable in $\Omega$.
\end{prop}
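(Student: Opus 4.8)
The plan is to show $\nabla^2 \Phi_2 \in L^1(\Omega)$ by establishing integrability of the single layer potential $\tilde\Phi_2$ (which agrees with $\Phi_2$ up to a constant by Proposition A.6) away from the boundary, and then controlling the behavior near $\partial\Omega$ by the local $C^1$-regularity already established. Recall $\tilde\Phi_2(x) = -\int_{\partial\Omega} E(x-y)h(y)\,\dd\mathcal{H}(y)$ with $h\in C(\partial\Omega)$ bounded and, crucially for $n=2$, satisfying the zero-average condition $\int_{\partial\Omega}h\,\dd\mathcal{H}=0$. First I would split $\Omega$ into a bounded part $\Omega\cap B_0(2R)$ and the exterior part $\Omega\cap\{|x|>2R\}$, where $\Omega^c\subset B_0(R)$.

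On the exterior part, the integrability is a decay estimate. Since $\partial\Omega\subset B_0(R)$ and $|x|>2R$, we have $|x-y|\gtrsim |x|$ uniformly for $y\in\partial\Omega$, so differentiating twice under the integral sign gives the pointwise bound
\begin{equation*}
\big|\nabla^2\tilde\Phi_2(x)\big| \lesssim \int_{\partial\Omega}\big|\nabla_x^2 E(x-y)\big|\,|h(y)|\,\dd\mathcal{H}(y) \lesssim \frac{\|h\|_{L^\infty(\partial\Omega)}}{|x|^{n}},
\end{equation*}
because $\nabla^2 E$ decays like $|x-y|^{-n}$ for all $n\geq 2$. Integrating $|x|^{-n}$ over $\{|x|>2R\}$ in $\mathbb{R}^n$ converges (the integrand is $r^{-n}\cdot r^{n-1}=r^{-1}$ in the radial variable, which is borderline); to make this genuinely integrable I would instead use the zero-moment improvement. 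Subtracting the constant-moment contribution, writing $\nabla^2_x E(x-y) - \nabla^2_x E(x) = O(|y||x|^{-n-1})$ by the mean value theorem, and using $\int h\,\dd\mathcal{H}=0$ when $n=2$ (and the faster intrinsic decay of $\nabla^2 E\sim|x|^{-n}$ together with the extra gained power for $n\geq 3$), yields $|\nabla^2\tilde\Phi_2(x)|\lesssim |x|^{-n-1}$, whose integral over the exterior region converges since $\int_{2R}^\infty r^{-n-1}r^{n-1}\,\dd r=\int_{2R}^\infty r^{-2}\,\dd r<\infty$.

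On the bounded part $\Omega\cap B_0(2R)$, I would argue that $\nabla^2\Phi_2$ is in fact bounded on compact subsets, hence integrable over a set of finite measure. Away from $\partial\Omega$, interior elliptic regularity for the harmonic function $\Phi_2$ gives local bounds on all derivatives. Near $\partial\Omega$, I would invoke the boundary Schauder/elliptic theory for the Neumann problem (A.2): since $g=\partial(\D h)/\partial n$ is continuous (indeed as regular as the $C^2$-boundary and the smooth datum $\varphi$ permit) and $\partial\Omega$ is $C^2$, the solution $\Phi_2$ has second derivatives bounded up to the boundary on the compact collar. Combined with $|\Omega\cap B_0(2R)|<\infty$, this gives $\nabla^2\Phi_2\in L^1(\Omega\cap B_0(2R))$.

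The main obstacle is the borderline decay at $n=2$: the naive bound $|\nabla^2 E|\sim|x|^{-2}$ produces only a logarithmically divergent integral over the plane, so the argument genuinely needs the cancellation from $\int_{\partial\Omega}h\,\dd\mathcal{H}=0$ (Proposition A.4(i)) to upgrade the decay to $|x|^{-3}$. I would therefore treat the moment-subtraction step carefully, expanding $\nabla_x^2 E(x-y)$ around $y=0$ and bounding the first-order Taylor remainder uniformly for $y\in\partial\Omega$, $|x|>2R$. Once this improved decay is in hand, assembling the two regions via $\nabla^2\Phi_2 = \nabla^2\tilde\Phi_2$ (Proposition A.6) completes the proof; the final conclusion $\nabla^2\Phi\in L^1(\Omega)$, and hence $\nabla\p\varphi=\nabla\Phi-\q\varphi$ with $\nabla^2\Phi_1\in L^1(\mathbb{R}^n)$ by Proposition A.2, follows immediately from $\Phi=\Phi_1+\Phi_2$.
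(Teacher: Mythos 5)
Your proposal is correct and follows essentially the same route as the paper: both arguments combine local regularity of $\Phi_2$ near $\partial\Omega$ (finite-measure region) with the zero-mean property $\int_{\partial\Omega}h\,\dd\mathcal{H}=0$ to subtract the leading term of the kernel and upgrade the far-field decay of $\nabla^{2}\tilde{\Phi}_{2}$ from $|x|^{-n}$ to $|x|^{-n-1}$, which is integrable at infinity (the paper implements the subtraction as $E(x-y)-E(x)=\int_0^1 y\cdot\nabla E(x-ty)\,\dd t$ before differentiating, whereas you Taylor-expand $\nabla_x^{2}E(x-y)$ directly — the two are equivalent). One small correction: the naive bound $|x|^{-n}$ gives the borderline radial integrand $r^{-1}$ and hence a logarithmically \emph{divergent} integral in every dimension $n\geq 2$, not only $n=2$, so the cancellation from the zero moment of $h$ is genuinely needed for all $n$ — which your remainder estimate in fact provides uniformly.
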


\begin{proof}
Since $\nabla^{2}\Phi_2$ is integrable near the boundary $\partial\Omega$, it suffices to show that $\nabla^{2}\Phi_2\in L^{1}(\{|x|\geq 2R\})$. Since $h\in C(\partial\Omega)$ satisfies $\int_{\partial\Omega}h\dd {\mathcal{H}}=0$, we observe that 
\begin{align*}
\tilde{\Phi}_{2}(x)
&=-\int_{\partial\Omega}(E(x-y)-E(x))h(y)\dd {\mathcal{H}}(y)\\
&=\int_{0}^{1}\dd t\int_{\partial\Omega}y\cdot (\nabla E)(x-ty)h(y)\dd {\mathcal{H}}(y).
\end{align*}
Since $\Omega^{c}\subset B_{0}(R)$, for $|x|\geq 2R$ we observe that 
\begin{align*}
|x-ty|&\geq \big|\ |x|-|ty|\ \big| \\
&\geq |x|-R \\
&\geq \frac{|x|}{2}.
\end{align*}
Since $\tilde{\Phi}_{2}$ agrees with $\Phi_2$ up to constant, we estimate
\begin{align*}
|\nabla^{2}\Phi_2(x)|
&\lesssim \int_{0}^{1}\dd t\int_{\partial\Omega}\frac{|h(y)|}{|x-ty|^{n+1}}\dd {\mathcal{H}}(y)\\
&\lesssim \frac{1}{|x|^{n+1}}||h||_{L^{1}(\partial\Omega)}.
\end{align*}
Thus, $\nabla^{2}\Phi_2$ is integrable in $\{|x|\geq2R\}$. The proof is complete.
\end{proof}

\begin{proof}[Proof of Lemma A.1]
By Propositions A.2 and A.6, the assertion follows.
\end{proof}

\vspace{5pt}

\bibliographystyle{plain}
\bibliography{ref}

\end{document}